\newtheorem{theorem}{Theorem}[section]
\newtheorem{lemma}[theorem]{Lemma}
\newtheorem{remark}[theorem]{Remark}
\newtheorem{definition}[theorem]{Definition}
\newtheorem{example}[theorem]{Example}
\numberwithin{equation}{section}
\begin{document}
\large
\title[KAM theorem on modulus of continuity about parameter]{KAM theorem on modulus of continuity about parameter}
\author{Zhicheng Tong}
\address{\scriptsize (Z. C. Tong)~College of Mathematics, Jilin University, Changchun 130012, P. R. China.}
\email{tongzc20@mails.jlu.edu.cn}

\author{Jiayin Du}
\address{\scriptsize (J. Y. Du)~College of Mathematics, Jilin University, Changchun 130012, P. R. China.}
\email{dujy20@mails.jlu.edu.cn}

\author{Yong Li}
\address{\scriptsize  (Y. Li)~Institute of Mathematics, Jilin University, Changchun 130012, P.R. China. School of Mathematics and Statistics, Center for Mathematics and Interdisciplinary Sciences, Northeast Normal University, Changchun, Jilin 130024, P. R. China.}
\email{liyong@jlu.edu.cn}
\thanks{The corresponding author (Y. Li) was supported in part by National Basic Research Program of China Grant (2013CB834100) and NSFC Grant (12071175, 11171132,  11571065), Project of Science and Technology Development of Jilin Province (2017C028-1, 20190201302JC), and Natural Science Foundation of Jilin Province (20200201253JC)}

\subjclass[2020]{Primary 37J40; Secondary 58F27}

\keywords{Hamiltonian systems;  Invariant tori; Frequency-preserving; Weak regularity}

\begin{abstract}
	In this paper, we study the Hamiltonian systems $ H\left( {y,x,\xi ,\varepsilon } \right) = \left\langle {\omega \left( \xi  \right),y} \right\rangle  + \varepsilon P\left( {y,x,\xi ,\varepsilon } \right) $, where $ \omega $ and $ P $ are continuous about $ \xi $. We prove that persistent invariant tori possess the same frequency as the unperturbed tori, under certain transversality condition and weak convexity condition for the frequency mapping $ \omega $. As a direct application, we prove a KAM theorem when the perturbation $P$ holds arbitrary H\"{o}lder continuity with respect to parameter $ \xi $. The infinite dimensional case is also considered. To our knowledge, this is the first approach to the systems with the only continuity in parameter beyond H\"older's type.
\end{abstract}

\maketitle
\tableofcontents

\maketitle
\tableofcontents
\section{Introduction}\label{sec-1}
The KAM method has been well studied since \cite{MR0140699,MR0163025,MR0170705,MR0199523,MR0206461,MR0147741}. Further efforts have been made in parameterized KAM \cite{R-18,MR1001032,MR0668410,MR1022821}, finitely differentiable KAM \cite{MR4104457,MR2111297}, Gevrey smooth KAM \cite{MR2104602}, general resonance KAM \cite{MR2003447}, generalized Hamiltonian KAM \cite{MR1926285}, iso-manifold KAM \cite{MR4355926}, R\"ussmann non-degenerate KAM \cite{MR1938331,MR1302146,MR1483538,MR1843664,MR1354540}, multiscale KAM \cite{MR3870557,MR4066039} and so on. However, no one knows how many dynamics of Hamiltonian systems starting from a weak smooth manifold can be maintained, and we will touch this question.

In this paper, we are able to prove that the perturbed invariant tori have the same Diophantine frequency as unperturbed quasi-periodic tori for a family of Hamiltonian systems with continuous parameters. More precisely, we consider the Hamiltonian systems under small perturbations:
\begin{equation}\label{H}
	H\left( {y,x,\xi ,\varepsilon } \right) = \left\langle {\omega \left( \xi  \right),y} \right\rangle  + \varepsilon P\left( {y,x,\xi ,\varepsilon } \right),
\end{equation}
where $ x $ is the angle variable in the standard torus $ \mathbb{T}^n $, $ n $ refers to the dimension; $ y $ is the action variable in $ G\subset {\mathbb{R}^n} $; and $ \xi $ is a parameter in $ \mathcal{O} \subset {\mathbb{R}^n} $, where $ G,\mathcal{O} $ are bounded connected closed regions with interior points. Moreover, $ \omega \left(  \cdot  \right) $ is continuous about $ \xi $ on $ \mathcal{O} $;  $ P\left( { \cdot , \cdot ,\xi ,\varepsilon } \right) $ is real analytic about $ y $ and $ x $ on $ G \times {\mathbb{T}^n} $, and $ P\left( {y,x, \cdot ,\varepsilon } \right) $ is only continuous about parameter $ \xi $; $ \varepsilon>0 $ is a sufficiently small scalar.   Such Hamiltonian systems are called usually parameterized ones. As is well known, at least Lipschitz continuity about parameter is needed in all KAM type results. See, for instance, \cite{MR1022821,MR1401420}. Hence one naturally asks the following basic problems:

\begin{itemize}
\item[\textbf{(Q1)}] \textbf{Can the Lipschitz type condition on parameter $\xi$ be weakened to the H\"{o}lder, further continuous one?}

\item[\textbf{(Q2)}] \textbf{How much persistence  
    of invariant tori is there for the action variables or parameters on a general manifold (might be zero Lebesgue measure), further frequency-preserving?}

\item[\textbf{(Q3)}] \textbf{What does the persistence in the infinite dimensional cases?}
\end{itemize}


These are certainly nontrivial. From the usual KAM iteration process, to overcome small divisor, one has to dig out some parameter domain at each KAM step,  while the Lipschitz continuity exactly ensures that the rest parameter set remains a positive measure. In the present paper, we will try to answer the above-mentioned problems. Concretely, we construct some sufficient conditions in terms of the transversality condition (\textbf{A1}), as well as the weak convexity condition (\textbf{A2}) for the frequency mapping, see Section \ref{sec-3} for details. It is worth mentioning that (\textbf{A2}) is indispensable, otherwise there will be counter examples. 
Recently, the restriction has been weakened to the H\"{o}lder continuous case, see \cite{R-18}. Fortunately, here we can further extend it to the case of continuous dependence with respect to parameter. Due to weak regularity, we cannot employ traditional way of digging out domain, which leads to another more direct manner to find out appropriate parameters. Particularly, we also develop such approach to infinite dimensional Hamiltonian systems.

This paper is organized as follows: in Section \ref{sec-2}, we introduce some basic notions; our main KAM Theorem \ref{theorem1} with frequency-preserving under weak regularity is stated in Section 3, and the proof is shown in Sections \ref{sec-4} and \ref{sec-5};  in Section \ref{sec-7}, we first make some comments, and then similarly give an infinite dimensional version about full dimensional tori; finally in Section \ref{sec-6}, we prove an arbitrary H\"older continuous KAM by applying  Theorem \ref{theorem1}, and we also give an explicit example via weak regularity (both the frequency mapping and the perturbation are nowhere H\"older continuous on some region), which cannot be studied by any KAM theorem known.

\section{Preliminaries}\label{sec-2}

We first give some notions, including modulus of continuity as well as the norm based on it.

\begin{definition}\label{definition1}
	Denote by $ \varpi (x) $ a modulus of continuity, which is a strictly monotonic increasing continuous function on $ \mathbb{R}^+ $, such that $ \mathop {\lim }\limits_{x \to {0^ + }} \varpi \left( x \right) = 0 $ and $ \mathop {\overline {\lim } }\limits_{x \to {0^ + }} x/{\varpi }\left( x \right) <  + \infty  $.
\end{definition}
\begin{remark}\label{logarithmic Lipschitz}
	A modulus of continuity $ \varpi $ is called  Logarithmic Lipschitz if $ {\varpi  }  \sim  -1/\ln x $ with respect to $ x \to 0^+ $. This will be used in establishing KAM theorem of arbitrary H\"{o}lder continuous type.
\end{remark}
\begin{definition}
	A function $ f(x) $ is called $ \varpi  $ continuous about $ x $, if for any fixed $ x $, there exists a modulus of continuity $ \varpi $  such that the following holds
	\[\left| {f\left( x \right) - f\left( y \right)} \right| \leqslant  {\varpi}\left( {\left| {x - y} \right|} \right)\;\; \forall 0<|x-y| \leqslant 1.\]
\end{definition}

\begin{definition}\label{qr}
	Let $ {\varpi _1} $ and $ {\varpi _2} $ be modulus of continuity. We say $ {\varpi _1} $ is not weaker than $ {\varpi _2} $ if $ \mathop {\overline {\lim } }\limits_{x \to {0^ + }} {\varpi _1}\left( x \right)/{\varpi _2}\left( x \right) <  + \infty  $, and denote $ {\varpi _1} \lesssim {\varpi _2} $ (or $ {\varpi _2} \gtrsim  {\varpi _1}$).
\end{definition}
\begin{remark}\label{666}
	Let $ \bar \Omega  \subset {\mathbb{R}^d} $ be a bounded connected closed region with $ d \in {\mathbb{N}^ + } $. If $ f\left( x \right) \in C\left( {\bar \Omega } \right) $, then $ f $ has a modulus of continuity $ \varpi $.
\end{remark}

\begin{definition}
	For the perturbation function $ P\left( {y,x,\xi,\varepsilon } \right) $, which is analytic about $ y $ and $ x $ on a closed connected region $ D \subset {\mathbb{R}^n} \times {\mathbb{R}^n} $ and $ \varpi _1 $ continuous about $ \xi $  on $ \mathcal{O} $, we define its norm as follows
	\[{\left\| P \right\|_D}: = {\left| P \right|_D} + {\left\| P \right\|_{{\varpi _1}}},\]
	where
	\begin{align*}
		{\left| P \right|_D} &:= \mathop {\sup }\limits_{\xi  \in \mathcal{O}} \mathop {\sup }\limits_{\left( {y,x} \right) \in D} \left| P \right|,\\
		{\left\| P \right\|_{{\varpi _1}}} &:= \mathop {\sup }\limits_{\left( {y,x} \right) \in D} \mathop {\sup }\limits_{\xi ,\zeta  \in \mathcal{O},0 < \left| {\xi  - \zeta } \right| \leqslant {1 }} \frac{{\left| {P\left( {y,x,\xi ,\varepsilon } \right) - P\left( {y,x,\zeta ,\varepsilon } \right)} \right|}}{{{\varpi _1}\left( {\left| {\xi  - \zeta } \right|} \right)}}.
	\end{align*}
\end{definition}

\section{Statement of results}\label{sec-3}
We are now ready to state our assumptions. Mainly, for any $ \varepsilon  > 0 $ small enough, we consider the parameterized family of the perturbed Hamiltonian systems \eqref{H}, i.e.,
\begin{equation*}
	\left\{ \begin{gathered}
		H:G \times {\mathbb{T}^n} \times \mathcal{O} \to {\mathbb{R}^1}, \hfill \\
		H\left( {y,x,\xi,\varepsilon } \right) = \left\langle {\omega \left( \xi  \right),y} \right\rangle  + \varepsilon P\left( {y,x,\xi ,\varepsilon } \right). \hfill \\
	\end{gathered}  \right.
\end{equation*}
According to Remark \ref{666}, there is a modulus of continuity $ \varpi_1 $ such that $ P $ is $ \varpi_1 $ continuous about $ \xi $. 

First, we make the following assumptions:


(\textbf{A1})\ Given $ \tt{p} \in \mathbb{R}^n $ and denote by $ \mathcal{O}^o $ the interior of $ \mathcal{O} $. Assume there exists a $ {\xi _0} \in {\mathcal{O}^o} $ such that $ \omega(\xi_0)=\tt{p} $,
and for given $ \tau  > 0$, $ \omega \left( {{\xi _0}} \right) $ satisfies the Diophantine condition
\begin{equation}\label{dioph}
	\left| {\left\langle {k,\omega \left( {{\xi _0}} \right)} \right\rangle } \right| \geqslant \gamma {\left| k \right|^{ - \tau }},\;\;0 \ne k \in {\mathbb{Z}^n},\;\;\gamma  > 0.
\end{equation}

(\textbf{A2})\ Assume there exist a neighborhood $ B\left( {{\xi _0},\delta } \right) \subset \mathcal{O}^o $ of $ \xi_0 $ with some $ \delta>0 $ and a modulus of continuity $ \varpi_2 $ satisfying $ \varpi_1 \lesssim \varpi_2 $, such that
\begin{equation}\notag
	\left| {\omega \left( \xi  \right) - \omega \left( \zeta  \right)} \right| \geqslant {\varpi _2}\left( {\left| {\xi  - \zeta } \right|} \right),\;\;\forall 0 < \left| {\xi  - \zeta } \right| \leqslant {1 },\;\;\xi ,\zeta  \in B\left( {{\xi _0},\delta } \right).
\end{equation}


Then we have the following main result:

\begin{theorem}\label{theorem1}
	Consider Hamiltonian systems \eqref{H}. Assume, besides the continuity in $\xi$ and the analyticity in $(y,x)$ mentioned in Section \ref{sec-1}, that (\textbf{A1}), (\textbf{A2}) 
	hold. Then there exists a sufficiently small $ {\varepsilon _0} > 0 $, for any $ 0 < \varepsilon  < {\varepsilon _0} $,  there exists $ {\xi^* } \in B\left( {{\xi _0},\delta } \right) $, such that the	perturbed Hamiltonian system $ H\left( {y,x,{\xi^* },\varepsilon } \right) $ admits an invariant torus with frequency $ \tt{p}=\omega \left( {{\xi _0}} \right) $.
\end{theorem}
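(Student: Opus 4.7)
My plan is to run a modified KAM iteration in which the target frequency is held fixed at $\mathbf{p}=\omega(\xi_0)$ and the parameter $\xi$ itself is readjusted at every step so as to preserve this frequency, rather than discarding resonant parameter values as in the classical scheme. At step $m$ I would carry a Hamiltonian
\[
H_m(y,x,\xi,\varepsilon) = e_m(\xi) + \langle \omega_m(\xi), y\rangle + \varepsilon_m P_m(y,x,\xi,\varepsilon),
\]
together with a distinguished point $\xi_m\in B(\xi_0,\delta)$ satisfying $\omega_m(\xi_m)=\mathbf{p}$. Because $\mathbf{p}$ is Diophantine by (\textbf{A1}), the homological equation at $\xi=\xi_m$ can be solved after a Fourier truncation $|k|\leq K_m$, producing a generating function $S_m$ and an analytic symplectic change $\Phi_m$ (built only from data at $\xi_m$, hence independent of $\xi$) such that $H_{m+1}:=H_m\circ\Phi_m$ is again of the same form, with a new frequency $\omega_{m+1}(\xi)=\omega_m(\xi)+\varepsilon_m b_m(\xi)$ and a quadratically smaller remainder $\varepsilon_{m+1}P_{m+1}$. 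The $\varpi_1$-continuity in $\xi$ is inherited by $\omega_{m+1}$ and $P_{m+1}$ with constants controlled by Cauchy estimates in the analyticity width of $(y,x)$.

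The crucial new ingredient is locating the next parameter $\xi_{m+1}$, i.e.\ solving
\[
\omega_m(\xi_{m+1}) - \omega_m(\xi_m) = -\varepsilon_m\, b_m(\xi_{m+1})
\]
for $\xi_{m+1}$ close to $\xi_m$. Here (\textbf{A2}) enters decisively: by an inductive argument the corrected frequency $\omega_m$ still satisfies a lower bound of the same form (say with constant $\tfrac12$), so it is continuous and injective near $\xi_m$. Invariance of domain then provides a continuous local inverse, and the quantitative bound in (\textbf{A2}) gives
\[
|\xi_{m+1}-\xi_m| \leq \varpi_2^{-1}\!\left(C\varepsilon_m\right).
\]
The hypothesis $\varpi_1\lesssim\varpi_2$ is used twice: first, to show that the $\varpi_1$-regular correction $\varepsilon_m b_m$ cannot eat into the $\varpi_2$-lower bound on $\omega$ when $\varepsilon$ is small, so that (\textbf{A2}) is preserved at every scale; and second, to produce the contraction that drives the fixed-point argument above.

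Convergence is then classical in spirit. Choosing the geometric schedules $\varepsilon_{m+1}\sim\varepsilon_m^{3/2}$, $K_m\to\infty$ and shrinking analyticity widths in the usual KAM fashion so the inductive estimates close, super-exponential decay of $\varepsilon_m$ renders $\sum_m\varpi_2^{-1}(C\varepsilon_m)$ finite for any admissible modulus $\varpi_2$, so $(\xi_m)$ is Cauchy and $\xi^\ast:=\lim_m\xi_m\in B(\xi_0,\delta)$. The composition $\Phi_1\circ\cdots\circ\Phi_m$ converges to an analytic symplectic embedding $\Phi_\infty$, and $H(\cdot,\cdot,\xi^\ast,\varepsilon)\circ\Phi_\infty$ sits in Kolmogorov normal form at $y=0$ with frequency exactly $\mathbf{p}$, producing the required invariant torus.

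The main obstacle, and what seems genuinely new, is the parameter-adjustment step. Traditional parameterized KAM calls on the implicit function theorem, Whitney-smooth extensions, or excision of bad-parameter sets, all of which demand at least Lipschitz dependence of $\omega$ on $\xi$ and simply fail here. The replacement is the topological invariance-of-domain argument coupled with the metric lower bound $\varpi_2$; the delicate point is to verify that this weaker machinery survives the full iteration, i.e.\ that the accumulated $\varpi_1$-corrections never undermine the $\varpi_2$-lower bound on $\omega_m$. It is precisely the compatibility $\varpi_1\lesssim\varpi_2$ that makes this possible, and it is what one should expect to be the crux of the detailed estimates.
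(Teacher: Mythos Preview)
Your proposal is correct and follows essentially the same route as the paper: a KAM iteration in which the parameter is translated at each step to keep the frequency fixed at $\omega(\xi_0)$, with the topological injectivity coming from (\textbf{A2}) supplying solvability of the frequency equation and the quantitative lower bound supplying the Cauchy estimate $|\xi_{m+1}-\xi_m|\leq\varpi_2^{-1}(C\varepsilon_m)$. The only cosmetic differences are that the paper invokes Brouwer degree (via Nagumo's theorem) rather than invariance of domain, and it sharpens your convergence bound to $|\xi_{m+1}-\xi_m|\leq c\mu_m$ by chaining $\varpi_2^{-1}\lesssim\varpi_1^{-1}$ with the defining property $\overline{\lim}_{x\to0^+}x/\varpi_1(x)<\infty$.
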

\begin{remark}\label{Remarkk3.2}
	Assumption (\textbf{A2}) is extremely important and indispensable, 
	otherwise there will be counter examples, see \cite{R-18}. In fact, it could be weakened, such as the modulus of continuity depends on the position of the point, and we do not pursue this point. Note that (\textbf{A2}) is assumed to be a local property and not necessarily global, because the solvability provided by (\textbf{A1}) can actually be around $ \xi_0 $, thanks to the uniform smallness of the KAM perturbation. This implies that the continuity of the frequency mapping $ \omega $ could be arbitrarily weak beyond $ B(\xi_0, \delta) $, such as nowhere differentiable or even worse. 
This is a main point of our KAM theorem.
\end{remark}

In order to prove Theorem \ref{theorem1}, we need to give another KAM iterative scheme which uses the technique of parameter translation rather than the traditional way of digging out domain.

\section{KAM step}\label{sec-4}
In this section, we will give some lemmas of the standard KAM theorem, and the proof will be omitted here. For more details, see \cite{R-18}. Without losing generality, let $ diam\mathcal{O} \leqslant 1 $.
\subsection{Description of the $ 0 $-th KAM step}

Denote $\rho  = 1/10$, and let $ \eta  > 0 $ be an integer such that $ {\left( {1 + \rho } \right)^\eta } > 2 $. We define $ \gamma  = {\varepsilon ^{1/20}} $. We first define the following $ 0 $-th KAM step parameters:
\begin{align}
	&{r_0} = r,\;{\gamma _0} = \gamma ,\;{e_0} = 0,\;{\bar h _0} = 0,\;{\mu _0} = {\varepsilon ^{\frac{1}{{40\eta \left( {\tau  + 1} \right)}}}},\;{s_0} = \frac{{s{\gamma _0}}}{{16\left( {{M^ * } + 2} \right)K_1^{\tau  + 1}}},\notag \\
	&{\mathcal{O}_0} = \left\{ {\xi  \in \mathcal{O}|\left| {\xi  - {\xi _0}} \right| < dist\left( {{\xi _0},\partial \mathcal{O}} \right)} \right\},\notag \\
	&D \left( {{s_0},{r_0}} \right) = \left\{ {\left( {y,x} \right)|\ |y| < {s_0},\left| {\operatorname{Im} x} \right| < {r_0}} \right\},\notag
\end{align}
where $ 0 < {s_0},{\gamma _0},{\mu _0} \leqslant 1,\tau  > 0 $ and $ {M^ * } > 0 $ is a constant defined as in Lemma \ref{3.3}. Therefore, we can write
\begin{align}
	{H_0}: ={}& H\left( {y,x,{\xi _0}} \right) = {N_0} + {P_0},\notag \\
	{N_0}: ={}& {N_0}\left( {y,{\xi _0},\varepsilon } \right) = {e_0} + \left\langle {\omega \left( {{\xi _0}} \right),y} \right\rangle  + {\bar h _0},\notag \\
	{P_0}: ={}& \varepsilon P\left( {y,x,{\xi _0},\varepsilon } \right).\notag
\end{align}
According to the above parameters, we have the following estimate for $ P_0 $.

\begin{lemma}\label{3.1}
	There holds
	\begin{equation}\notag
		{\left\| {{P_0}} \right\|_{D\left( {{s_0},{r_0}} \right)}} \leqslant \gamma _0^{5}s_0^4{\mu _0}.
	\end{equation}
\end{lemma}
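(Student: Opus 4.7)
The plan is that this estimate is essentially a tautology coming from the smallness of the parameter $\varepsilon$ and the definitions of the $0$-th step constants; the only real work is careful bookkeeping of exponents. First I would unravel the definition: $P_0 = \varepsilon P(y,x,\xi_0,\varepsilon)$, so by linearity of the norm $\|\cdot\|_{D(s_0,r_0)} = |\cdot|_{D(s_0,r_0)} + \|\cdot\|_{\varpi_1}$ in $P$, we get $\|P_0\|_{D(s_0,r_0)} = \varepsilon \|P\|_{D(s_0,r_0)}$. Since $P$ is real analytic in $(y,x)$ on $G\times\mathbb{T}^n$ (hence on any subdomain $D(s_0,r_0)\subset G_\rho\times\mathbb{T}^n_r$ for suitably small $s_0,r_0$) and $\varpi_1$-continuous in $\xi$ on the compact set $\mathcal{O}$, the quantity
\[
M := |P|_{D(s_0,r_0)} + \|P\|_{\varpi_1}
\]
is a finite constant, independent of $\varepsilon$. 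Thus $\|P_0\|_{D(s_0,r_0)} \leq \varepsilon M$.

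Next I would substitute the $0$-th step parameters into the right-hand side. With $\gamma_0 = \varepsilon^{1/20}$, $\mu_0 = \varepsilon^{1/(40\eta(\tau+1))}$, and $s_0 = c\,\gamma_0$ where $c := s/(16(M^*+2)K_1^{\tau+1})$ is independent of $\varepsilon$ (at the $0$-th step $K_1$ can be treated as a fixed constant), a direct computation gives
\[
\gamma_0^{5} s_0^{4} \mu_0 = c^{4}\,\varepsilon^{\tfrac{5}{20} + \tfrac{4}{20} + \tfrac{1}{40\eta(\tau+1)}} = c^{4}\,\varepsilon^{\tfrac{9}{20} + \tfrac{1}{40\eta(\tau+1)}}.
\]
So the desired inequality $\varepsilon M \leq c^{4}\varepsilon^{9/20 + 1/(40\eta(\tau+1))}$ reduces to
\[
\varepsilon^{\,\tfrac{11}{20} - \tfrac{1}{40\eta(\tau+1)}} \leq \frac{c^{4}}{M}.
\]

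Since $\eta \geq 1$ and $\tau > 0$, the exponent $11/20 - 1/(40\eta(\tau+1))$ is strictly positive, so the inequality is satisfied for all $0 < \varepsilon < \varepsilon_0$ provided $\varepsilon_0$ is chosen small enough (depending on $M$, $s$, $M^*$, $K_1$, $\tau$, $\eta$). This is why the assertion that $\varepsilon_0$ exists is part of the setup of Theorem \ref{theorem1}. The \emph{main obstacle}, if any, is not analytic but bookkeeping: one must verify that the implicit constants $c$, $M$, $K_1$ at the $0$-th step are genuinely $\varepsilon$-independent (so that the exponent comparison is decisive) and that the reduced domain $D(s_0,r_0)$ with $s_0 \to 0$ as $\varepsilon \to 0$ still lies inside the original analyticity domain of $P$; both are immediate from the assumption that $P$ is analytic on the full $G\times\mathbb{T}^n$ with a uniform strip $r$ in $\mathrm{Im}\,x$ and from the definition of $s_0$.
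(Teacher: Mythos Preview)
Your approach is the right one and matches the standard argument (the paper itself simply cites Lemma~3.1 in \cite{R-18}): bound $\|P_0\|$ by $\varepsilon M$ with $M$ a fixed constant, expand $\gamma_0^5 s_0^4 \mu_0$ as a power of $\varepsilon$, and compare exponents.

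There is, however, one bookkeeping slip. You assert that $K_1$ ``can be treated as a fixed constant'' independent of $\varepsilon$, but from the definition $K_\nu = ([-\ln\mu_{\nu-1}]+1)^{3\eta}$ together with $\mu_0 = \varepsilon^{1/(40\eta(\tau+1))}$ one has
\[
K_1 = \Big(\big[\tfrac{1}{40\eta(\tau+1)}\ln\tfrac{1}{\varepsilon}\big]+1\Big)^{3\eta},
\]
which grows like $(\ln(1/\varepsilon))^{3\eta}$ as $\varepsilon\to 0$. Consequently your constant $c = s/(16(M^*+2)K_1^{\tau+1})$ is \emph{not} $\varepsilon$-independent, and the right-hand side is really of order
\[
\gamma_0^5 s_0^4 \mu_0 \;=\; \Big(\tfrac{s}{16(M^*+2)}\Big)^{4}\, \frac{\varepsilon^{\,9/20+1/(40\eta(\tau+1))}}{K_1^{4(\tau+1)}} \;\sim\; \frac{\varepsilon^{\,9/20+1/(40\eta(\tau+1))}}{(\ln(1/\varepsilon))^{12\eta(\tau+1)}}.
\]
The fix is immediate: the reduced inequality becomes
\[
\varepsilon^{\,11/20-1/(40\eta(\tau+1))}\,\big(\ln(1/\varepsilon)\big)^{12\eta(\tau+1)} \;\leq\; \text{const},
\]
and since the exponent $11/20 - 1/(40\eta(\tau+1))$ is still strictly positive, the logarithmic factor is dominated and the left side still tends to $0$ as $\varepsilon\to 0^+$. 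So your conclusion stands, but the point you flagged as ``the main obstacle'' (that $K_1$ be $\varepsilon$-independent) actually fails as written and needs this log-versus-power observation to close.
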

\begin{proof}
	See Lemma 3.1 in \cite{R-18}.
\end{proof}

\subsection{Induction from $ \nu $-th KAM step}
\subsubsection{Description of the $ \nu $-th KAM step}
We now define the $ \nu $-th KAM step parameters:
\begin{equation}\label{miuniu}
	{r_\nu } = {r_{\nu  - 1}}/2 + {r_0}/4,\;\;{s_\nu } = \mu _\nu ^{2\rho }{s_{\nu  - 1}}/8,\;\;{\mu _\nu } = {8^4}\mu _{\nu  - 1}^{1 + \rho }.
\end{equation}
Now suppose that at $ \nu $-th step, we have arrived at the following real analytic Hamiltonian:
\begin{equation}\label{1}
	\begin{aligned}
		{H_\nu } ={}& {N_\nu } + {P_\nu }, \\
		{N_\nu } ={}& {e_\nu } + \left\langle {\omega \left( {{\xi _0}} \right),y} \right\rangle  + {\bar h _\nu }\left( {y,\xi } \right),
	\end{aligned}
\end{equation}
defined on $ D\left( {{s_\nu },{r_\nu }} \right) $ and
\begin{equation}\notag
	{\left\| {{P_\nu }} \right\|_{D\left( {{s_\nu },{r_\nu }} \right)}} \leqslant \gamma _0^{5}s_\nu ^4{\mu _\nu }.
\end{equation}
The equation of motion associated to $ H_\nu $ is
\begin{equation}\label{motion}
	\left\{ \begin{gathered}
		{\dot{y}_\nu } =  - {\partial _{{x_\nu }}}{H_\nu }, \hfill \\
		{\dot{x}_\nu } = {\partial _{{y_\nu }}}{H_\nu }. \hfill \\
	\end{gathered}  \right.
\end{equation}

Except for additional instructions, we will omit the index for all quantities of the present
KAM step (at $ \nu $th-step) and use + to index all quantities (Hamiltonian, domains, normal form, perturbation, transformation, etc.) in the next KAM step (at $ \nu +1$th-step). To simplify the notations, we will not specify the dependence of $ P $, $ P_+ $ etc. All the constants $ c_1 $ - $ c_6 $ below are positive and independent of the iteration process, and we will also use $ c $ to denote any intermediate positive constant which is independent of the iteration process.

Define
\begin{align}
	{r_ + } ={}& r/2 + {r_0}/4,\notag \\
	{s_ + } ={}& \alpha s/8,\alpha  = {\mu ^{2\rho }} = {\mu ^{1/{5}}},\notag \\
	{\mu _ + } ={}& {8^4}{c_0}{\mu ^{1 + \rho }},{c_0} = 1 + \mathop {\max }\limits_{1 \leqslant i \leqslant 6} {c_i},\notag \\
	{K_ + } ={}& {\left( {\left[ { - \ln \mu } \right] + 1} \right)^{3\eta }},\notag \\
	D\left( s \right) ={}& \left\{ {y \in {\mathbb{C}^n}|\left| y \right| < s} \right\},\notag \\
	\hat D ={}& D\left( {s,{r_ + } + 7\left( {r - {r_ + }} \right)/8} \right),\notag \\
	\tilde D ={}& D\left( {s/2,{r_ + } + 6\left( {r - {r_ + }} \right)/8} \right),\notag \\
	{D_{i\alpha /8}} ={}& D\left( {i\alpha s/8,{r_ + } + \left( {i - 1} \right)\left( {r - {r_ + }} \right)/8} \right),\;1 \leqslant i \leqslant 8,\notag \\
	{D_ + } ={}& {D_{\alpha /8}} = D\left( {{s_ + },{r_ + }} \right),\notag \\
	\Gamma \left( {r - {r_ + }} \right) ={}& \sum\limits_{0 < \left| k \right| \leqslant {K_ + }} {{{\left| k \right|}^{3\tau  + 5}}{e^{ - \left| k \right|\left( {r - {r_ + }} \right)/8}}}. \notag
\end{align}
\subsubsection{Construct a symplectic transformation}
We will construct a symplectic coordinate transformation $ \Phi _ +  $:
\begin{equation}\notag
	{\Phi _ + }:\left( {{y_ + },{x_ + }} \right) \in D\left( {{s_ + },{r_ + }} \right) \to {\Phi _ + }\left( {{y_ + },{x_ + }} \right) = \left( {y,x} \right) \in D\left( {s,r} \right)
\end{equation}
such that it transforms the Hamiltonian \eqref{1} into the Hamiltonian of the next KAM cycle (at $ (\nu + 1) $-th step)
\begin{equation}\notag
	{H_ + } = H \circ {\Phi _ + } = {N_ + } + {P_ + },
\end{equation}
where $ N_+ $ and $ P_+ $ have similar properties as $ N $ and $ P $ respectively on $ D(s_+,r_+) $, and the equation of motion \eqref{motion} is changed into
\begin{equation}\notag
	\left\{ \begin{gathered}
		{y_ + } =  - {\partial _{{x_ + }}}{H_ + }, \hfill \\
		{x_ + } = {\partial _{{y_ + }}}{H_ + }. \hfill \\
	\end{gathered}  \right.
\end{equation}
\subsubsection{Truncation}
Consider the truncation of Taylor-Fourier series of $ P $
\[P = \sum\limits_{k \in {\mathbb{Z}^n},l \in \mathbb{Z}_ + ^n} {{p_{kl}}{y^l}{e^{\sqrt { - 1} \left\langle {k,x} \right\rangle }}} ,\;\;R = \sum\limits_{\left| k \right| \leqslant {K_ + },\left| l \right| \leqslant 4} {{p_{kl}}{y^l}{e^{\sqrt { - 1} \left\langle {k,x} \right\rangle }}} .\]
\begin{lemma}\label{3.2}
	Assume that
	\begin{equation}\notag
		\int_{{K_ + }}^{ + \infty } {{t^n}{e^{ - t\left( {r - {r_ + }} \right)/16}}dt}  \leqslant \mu .
	\end{equation}
	Then there is a constant $ c_1>0 $ such that
	\begin{align}
		{\left\| {P - R} \right\|_{{D_\alpha }}} \leqslant{}& {c_1}\gamma _0^{5}{s^4}{\mu ^2}, \notag \\
		{\left\| R \right\|_{{D_\alpha }}} \leqslant{}& {c_1}\gamma _0^{5}{s^4}\mu .\label{R}
	\end{align}
\end{lemma}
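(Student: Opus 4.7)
The plan is to run the standard KAM truncation argument, carried out separately for the sup-norm part $|\cdot|_{D_\alpha}$ and the modulus-of-continuity part $\|\cdot\|_{\varpi_1}$ of the norm $\|\cdot\|_{D_\alpha}$. I would split $P-R = P^{(1)} + P^{(2)}$, where $P^{(1)} = \sum_{|k|>K_+}\sum_l p_{kl} y^l e^{\sqrt{-1}\langle k,x\rangle}$ is the high-frequency tail and $P^{(2)} = \sum_{|k|\le K_+}\sum_{|l|\ge 5} p_{kl} y^l e^{\sqrt{-1}\langle k,x\rangle}$ is the high-order Taylor tail, and then bound each piece in both norms.

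For the coefficients, I would apply Cauchy's estimate on the polydisc in $y$ of radius $s$ together with the Fourier decay on the strip of width $r$, obtaining pointwise in $\xi$ that $|p_{kl}(\xi)| \le |P|_{D(s,r)}\, s^{-|l|} e^{-|k|r}$, and by linearity of the Fourier-Taylor projection the analogous estimate $|p_{kl}(\xi)-p_{kl}(\zeta)| \le \|P\|_{\varpi_1} s^{-|l|} e^{-|k|r}\,\varpi_1(|\xi-\zeta|)$. The inductive hypothesis $\|P\|_{D(s,r)} \le \gamma_0^{5} s^4 \mu$ controls both factors simultaneously, so from here the two norms on $P^{(1)}, P^{(2)}$ can be estimated by the same geometric sums.

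For $P^{(1)}$, on $D_\alpha$ the $x$-variable still lives in a strip of width at least $r_+ + (r-r_+)/8$, giving an exponential gain $e^{-|k|(r-r_+)/8}$, while the Taylor sum on $|y|\le \alpha s$ converges geometrically. Summing over the $n$-dimensional lattice tail $|k|>K_+$ and invoking the hypothesis $\int_{K_+}^{+\infty} t^n e^{-t(r-r_+)/16} dt \le \mu$ extracts the crucial extra factor of $\mu$, yielding $\|P^{(1)}\|_{D_\alpha} \le c\,\gamma_0^{5} s^4 \mu^2$. For $P^{(2)}$, restricting to $|y|\le \alpha s$ with $\alpha = \mu^{1/5}$ gives a geometric series in $|l|$ starting at $|l|=5$, producing an extra factor $\alpha^{5} = \mu$ that combines with the $s^4$ from the inductive hypothesis; the Fourier sum over $|k|\le K_+$ is polynomial in $K_+$ and absorbed into $c$. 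Adding the two contributions gives the first inequality, and the second follows by the triangle inequality $\|R\|_{D_\alpha} \le \|P\|_{D(s,r)} + \|P-R\|_{D_\alpha} \le \gamma_0^{5} s^4 \mu(1+c\mu) \le c_1 \gamma_0^{5} s^4 \mu$.

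The only conceptually nonroutine point, and the one I would emphasize in the write-up, is that the whole series manipulation must be repeated for the $\|\cdot\|_{\varpi_1}$ seminorm; this works verbatim because the Fourier-Taylor decomposition is linear in $P$ and the Cauchy bound on each coefficient applies equally to its $\xi$-seminorm. I do not expect a real obstacle here beyond bookkeeping the intermediate domains $\hat D$, $\tilde D$, $D_{i\alpha/8}$ so that the strip-width losses $(r-r_+)/8$ used by the Fourier decay and by the Taylor polydisc are consistently allocated.
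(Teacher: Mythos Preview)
The paper does not prove this lemma itself, deferring to Lemma~3.2 of \cite{R-18}; your outline is the standard truncation argument that reference contains, and you correctly isolate the one point specific to the present setting, namely that the $\varpi_1$-seminorm estimates go through verbatim because the Fourier--Taylor coefficient maps $P\mapsto p_{kl}$ are linear and Cauchy's bound applies to $P(\cdot,\cdot,\xi)-P(\cdot,\cdot,\zeta)$ just as to $P$ itself.

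One small slip to repair: in your treatment of $P^{(2)}$ you write that ``the Fourier sum over $|k|\le K_+$ is polynomial in $K_+$ and absorbed into $c$.'' On $D_\alpha$ that sum is $\sum_{|k|\le K_+} e^{-|k|(r-r_+)/8}$, which is controlled by $c(r-r_+)^{-n}$ rather than by anything polynomial in $K_+$, and since $r-r_+\to 0$ along the iteration this is not a $\nu$-independent constant as required of $c_1$. The clean way around this is to avoid re-summing the Fourier series altogether: first bound the Fourier-truncated function $\tilde P := P - P^{(1)}$ on the full $y$-ball by the triangle inequality,
\[
|\tilde P|_{D(s,\,r_+ + 7(r-r_+)/8)} \le |P|_{D(s,r)} + |P^{(1)}|_{D(s,\,r_+ + 7(r-r_+)/8)} \le 2\gamma_0^{5}s^4\mu,
\]
and only then apply the Cauchy/Taylor-remainder estimate in $y$ to $\tilde P$ as a single analytic function. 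This produces the factor $\alpha^{5}=\mu$ directly, yielding $\|P^{(2)}\|_{D_\alpha}\le c\gamma_0^{5}s^4\mu^2$ with a genuinely iteration-independent constant. With this adjustment your argument is complete.
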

\begin{proof}
	See Lemma 3.2 in \cite{R-18}.
\end{proof}

\subsubsection{Homological Equation}
We shall construct a symplectic transformation as the time $ 1 $-map $ \phi _F^1 $ of the flow generated by a Hamiltonian $ F $ to eliminate all resonant terms in $ R $. Define
\begin{equation}\notag
	F = \sum\limits_{0 < \left| k \right| \leqslant {K_ + },\left| l \right| \leqslant 4} {{f_{kl}}{y^l}{e^{\sqrt { - 1} \left\langle {k,x} \right\rangle }}} ,\;\;[R] = {\left( {2\pi } \right)^{ - n}}\int_{{\mathbb{T}^n}} {R\left( {y,x} \right)dx} ,
\end{equation}
then we obtain
\[	\left\{ {N,F} \right\} + R - \left[ R \right] = 0,\]
i.e.,
\begin{equation}\label{tdfc}
	\sqrt { - 1} \left\langle {k,\omega \left( {{\xi _0}} \right) + {\partial _y}\bar h } \right\rangle {f_{kl}} = {p_{kl}},\;\;\left| l \right| \leqslant 4,\\
	\;0 < \left| k \right| \leqslant {K_ + }.
\end{equation}
\begin{lemma}\label{3.3}
	Assume that
	\begin{equation}\notag
		\mathop {\max }\limits_{\left| i \right| \leqslant 2} {\left\| {\partial _y^i\bar h  - \partial _y^i{{\bar h }_0}} \right\|_{D\left( s \right)}} \leqslant \mu _0^{1/2},\;\;s < \frac{{{\gamma _0}}}{{\left( {2\left( {{M^ * } + 2} \right)K_ + ^{\tau  + 1}} \right)}},
	\end{equation}
	where
	\[{M^ * } = \mathop {\max }\limits_{\left| i \right| \leqslant 2,y \in D\left( s \right)} \left| {\partial _y^i{{\bar h }_0}\left( {{\xi _0},y} \right)} \right|.\]
	Then the quasi-linear equations \eqref{tdfc} can be uniquely solved on $ D(s) $ to obtain a family of functions $ f_{kl} $ which are analytic in $ y $, and satisfy the following properties:
	\begin{equation}\notag
		{\left\| {\partial _y^i{f_{kl}}} \right\|_{D\left( s \right)}} \leqslant {c_2}{\left| k \right|^{\left( {\left| i \right| + 1} \right)\tau  + \left| i \right|}}\gamma _0^{4 - \left| i \right|}{s^{4 - \left| i \right|}}\mu {e^{ - \left| k \right|r}}
	\end{equation}
	for $ \left| l \right| \leqslant 4,0 < \left| k \right| \leqslant {K_ + }$ and $\left| i \right| \leqslant 4 $,	where $ c_2>0 $ is a generic constant.
\end{lemma}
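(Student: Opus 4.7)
The strategy is to invert the small divisor
$$\Delta_k(y,\xi):=\langle k,\omega(\xi_0)+\partial_y\bar h(y,\xi)\rangle$$
pointwise on $D(s)$ for every $0<|k|\leq K_+$ and $|l|\leq 4$, and then to estimate the $y$-derivatives of the resulting $f_{kl}=p_{kl}/(\sqrt{-1}\,\Delta_k)$ via Fa\`a di Bruno combined with the small-divisor bound. Since \eqref{tdfc} is diagonal in Fourier--Taylor modes, it splits into independent scalar equations, each uniquely solvable once $\Delta_k$ is shown to be nowhere vanishing on $D(s)$.

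\textbf{Step 1 (small-divisor lower bound).} By (\textbf{A1}), $|\langle k,\omega(\xi_0)\rangle|\geq \gamma_0|k|^{-\tau}$. The inductive hypothesis $\|\partial_y^i\bar h-\partial_y^i\bar h_0\|_{D(s)}\leq \mu_0^{1/2}$ for $|i|\leq 2$, together with the definition of $M^*$, yields $\|\partial_{yy}^2\bar h\|_{D(s)}\leq M^*+\mu_0^{1/2}\leq M^*+2$. Since the parameter-translation convention of this KAM scheme keeps $\partial_y\bar h(0,\xi)=0$ at every step (the frequency drift is corrected by moving $\xi$ rather than by excising a subset), Taylor's formula in $y$ yields $|\partial_y\bar h(y,\xi)|\leq (M^*+2)s$ on $D(s)$. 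Consequently,
$$|\Delta_k(y,\xi)|\geq \gamma_0|k|^{-\tau}-(M^*+2)|k|s\geq \frac{\gamma_0}{2|k|^{\tau}},$$
the last inequality following from $|k|\leq K_+$ and the standing hypothesis $s<\gamma_0/(2(M^*+2)K_+^{\tau+1})$.

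\textbf{Step 2 (derivative bounds).} With $\Delta_k$ bounded below, the unique analytic solution is $f_{kl}=p_{kl}/(\sqrt{-1}\,\Delta_k)$, where $p_{kl}$ is a constant Fourier--Taylor coefficient. For $|i|=0$, pairing the divisor bound with the Cauchy estimate $|p_{kl}|\lesssim \gamma_0^{5}s^{4}\mu\,e^{-|k|r}$ inherited from Lemma \ref{3.2} (applied to \eqref{R}) gives the claim. For $1\leq|i|\leq 4$ I would expand $\partial_y^i(1/\Delta_k)$ by Fa\`a di Bruno: each factor $1/\Delta_k^{j+1}$ contributes $\leq(2|k|^\tau/\gamma_0)^{j+1}$, while each factor $\partial_y^m\Delta_k=\langle k,\partial_y^{m+1}\bar h\rangle$ contributes $|k|$ times a derivative of $\bar h$ bounded by $M^*+2$ for $m\leq 1$ and, for $m\geq 2$, by Cauchy estimates on a slightly smaller disk $D(s/2)$. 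Tracking powers produces $\|\partial_y^i(1/\Delta_k)\|_{D(s)}\lesssim |k|^{(|i|+1)\tau+|i|}\gamma_0^{-(|i|+1)}$; multiplying by the bound on $p_{kl}$, and absorbing the $s$-dependence into $s^{4-|i|}$ via a supplementary Cauchy estimate in $y$, yields the advertised estimate with $c_2$ absorbing all combinatorial constants.

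\textbf{Anticipated obstacle.} The only delicate part is the bookkeeping in Step 2: one must verify that repeated Fa\`a di Bruno differentiations of $1/\Delta_k$ do not inflate the $|k|$-exponent beyond $(|i|+1)\tau+|i|$, since any excess would destroy the summability of $\Gamma(r-r_+)$ at the next KAM step. Because (\textbf{A1}) fixes $\omega(\xi_0)$ throughout this lemma, no $\xi$-derivatives need to be tracked here, and the argument runs parallel to Lemma 3.3 of \cite{R-18}, whose detailed computation can be imported almost verbatim; the novelty of the present paper is concentrated in the frequency-correction step via parameter translation rather than in the homological equation itself.
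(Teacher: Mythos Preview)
Your proposal is correct and follows the standard route; the paper itself gives no proof here, merely citing Lemma~3.3 of \cite{R-18}, and what you have written is precisely the argument one finds by following that reference. Your key observation that the parameter-translation scheme keeps $\partial_y\bar h(0,\xi)=0$ at every step (since $\bar h_+-\bar h=[R]-p_{00}^\nu-\langle p_{01}^\nu,y\rangle$ has vanishing value and gradient at $y=0$) is exactly what makes the small-divisor bound in Step~1 go through, and the Fa\`a di Bruno bookkeeping in Step~2 is the standard way to obtain the stated $|k|^{(|i|+1)\tau+|i|}\gamma_0^{-(|i|+1)}$ growth.
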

\begin{proof}
	See Lemma 3.3 in \cite{R-18}.
\end{proof}

Applying the above transformation $ \phi _F^1 $ to Hamiltonian $ H $ we obtain that
\[H \circ \phi _F^1 = \left( {N + R} \right) \circ \phi _F^1: = {\bar N _ + } + {\bar P _ + },\]
where
\begin{align*}
	{}&{\bar N _ + } = N + \left[ R \right] = {e_ + } + \left\langle {\omega \left( \xi  \right),y} \right\rangle  + \Big\langle {\sum\limits_{j = 0}^\nu  {p_{01}^j\left( \xi  \right)} ,y} \Big\rangle  + {\bar h _ + }\left( {y,\xi } \right),\\
	{}&{e_ + } = e + p_{00}^\nu ,\\
	{}&{\bar h _ + }\left( {y,\xi } \right) = \bar h \left( {y,\xi } \right) + \left[ R \right] - p_{00}^\nu  - \left\langle {p_{01}^\nu \left( \xi  \right),y} \right\rangle ,\\
	{}&{\bar P _ + } = \int_0^1 {\left\{ {{R_t},F} \right\} \circ \phi _F^tdt}  + \left( {P - R} \right) \circ \phi _F^1,\\
	{}&{R_t} = \left( {1 - t} \right)\left[ R \right] + tR.
\end{align*}
\subsubsection{Translation}
In this subsection, we construct a translation so as to keep the frequency unchanged. Consider
\[\phi :\;\;x \to x,\;\;y \to y,\;\;\widetilde \xi  \to \widetilde \xi  + {\xi _ + } - \xi ,\]
where $ {\xi _ + } $ is to be determined. Let $ {\Phi _ + } = \phi _F^1 \circ \phi  $, then
\begin{align*}
	{}&H \circ {\Phi _ + } = {N_ + } + {P_ + },\\
	{}&{N_ + } = {\bar N _ + } \circ \phi  = {e_ + } + \left\langle {\omega \left( {{\xi _ + }} \right),y} \right\rangle  + \Big\langle {\sum\limits_{j = 0}^\nu  {p_{01}^j\left( {{\xi _ + }} \right)} ,y} \Big\rangle  + {\bar h _ + }\left( {y,{\xi _ + }} \right),\\
	{}&{P_ + } = {\bar P _ + } \circ \phi .
\end{align*}
\subsubsection{Frequency-preserving
}\label{kkkk}
In this subsection, we will show that the frequency can be preserved in the iteration process. Recall 
the transversality condition (\textbf{A1}) and the weak convexity condition (\textbf{A2}). Then the frequency mapping $\omega(\xi)$ is injective on ${\mathcal{O}}$, and consequently, it is surjective from ${\mathcal{O}}$ to $\omega({\mathcal{O}})$.  Therefore it is a homeomorphism and by Nagumo's theorem, we have that the Brouwer degree $\deg(\omega,{\mathcal{O}},\omega(\xi_0))=\pm 1$. This ensures that the parameter $ {{\xi _ \nu }} $ can be found in the parameter set to keep the frequency unchanged at this KAM step. The latter assures that $ \left\{ {{\xi _\nu }} \right\} $ is a Cauchy sequence. The following lemma is crucial to our arguments.
\begin{lemma}\label{crucial}
	Assume that
	\begin{equation}\notag
	{\mathop {\sup }\limits_{\xi  \in \mathcal{O}} \left| {\sum\nolimits_{j = 0}^\nu  {p_{01}^j} } \right|} < \mu _0^{1/2}.	
	\end{equation}
	There exists $ {\xi _ + } \in {B_{c\mu }}\left( \xi  \right) \subset {\mathcal{O}^o} $ such that
	\begin{equation}\label{omega}
		\omega \left( {{\xi _ + }} \right) + \sum\limits_{j = 0}^\nu  {p_{01}^j\left( {{\xi _ + }} \right)}  = \omega \left( {{\xi _0}} \right).
	\end{equation}
\end{lemma}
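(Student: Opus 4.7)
The plan is to recast the problem as finding a zero of $F(\zeta) := \omega(\zeta) + \sum_{j=0}^{\nu} p_{01}^j(\zeta) - \omega(\xi_0)$ inside the ball $B_{c\mu}(\xi)$. Carrying along the induction hypothesis $\omega(\xi_\nu) + \sum_{j=0}^{\nu-1} p_{01}^j(\xi_\nu) = \omega(\xi_0)$ from the previous KAM step (trivially true at $\nu = 0$, where $\xi = \xi_0$ and the sum is empty), one obtains $F(\xi) = p_{01}^\nu(\xi)$, which is of order $\mu$ by a Cauchy estimate applied to the truncation bound $\|R\|_{D_\alpha}\leq c_1\gamma_0^5 s^4\mu$ from Lemma \ref{3.2}. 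The smallness hypothesis $\sup|\sum p_{01}^j| < \mu_0^{1/2}$, together with the KAM-iterative control of the $\varpi_1$-norms of the individual $p_{01}^j$ (themselves inherited from the $\varpi_1$-regularity of $P$ through the homological equation), puts us in the regime where a Brouwer-degree argument on $B_{c\mu}(\xi)$ becomes feasible.

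Next I would introduce the straight-line homotopy $F_t(\zeta) := (1-t)(\omega(\zeta) - \omega(\xi)) + tF(\zeta)$, $t\in[0,1]$, so that $F_1 = F$ while $F_0(\zeta) = \omega(\zeta) - \omega(\xi)$ has an isolated zero at $\zeta = \xi$ of local degree $\pm 1$ by Nagumo's theorem, since $\omega$ restricted to $B(\xi_0,\delta)$ is a homeomorphism by (\textbf{A1})--(\textbf{A2}) (as recalled in Section~\ref{kkkk}). For $\zeta \in \partial B_{c\mu}(\xi)$, splitting $F_t(\zeta) = (F_t(\zeta) - F_t(\xi)) + F_t(\xi)$, the first term is bounded below via (\textbf{A2}) and $\varpi_1 \lesssim \varpi_2$ by $\varpi_2(c\mu) - t\|\sum p_{01}^j\|_{\varpi_1}\varpi_1(c\mu) \geq \frac{1}{2}\varpi_2(c\mu)$ once the cumulative $\varpi_1$-norm is small, while $F_t(\xi) = tp_{01}^\nu(\xi)$ is $O(\mu)$, hence $\leq \frac{1}{4}\varpi_2(c\mu)$ upon invoking the built-in lower bound $\varpi_2(x) \gtrsim x$ of a modulus of continuity. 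Therefore $F_t$ is nowhere zero on $\partial B_{c\mu}(\xi)$, and homotopy invariance of the Brouwer degree gives $\deg(F, B_{c\mu}(\xi), 0) = \deg(\omega - \omega(\xi), B_{c\mu}(\xi), 0) = \pm 1$, producing the required $\xi_+$. The inclusion $B_{c\mu}(\xi) \subset \mathcal{O}^o$ is automatic for small $\mu$, because $\xi = \xi_\nu$ stays in $B(\xi_0,\delta)$ throughout the iteration thanks to $|\xi_{\nu+1} - \xi_\nu| \leq c\mu_\nu$ and the superexponential decay of $\mu_\nu$ from (\ref{miuniu}).

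The main obstacle is precisely the boundary estimate above: the constant $c$ in $B_{c\mu}(\xi)$ must be calibrated so that the $\omega$-separation $\varpi_2(c\mu)$ comfortably dominates both the cumulative perturbation $\|\sum p_{01}^j\|_{\varpi_1}\varpi_1(c\mu)$ and the residual $|p_{01}^\nu(\xi)|$ of order $\mu$. This is the delicate tension between the low regularity $\varpi_1$ of the $p_{01}^j$'s and the quantitative injectivity gap $\varpi_1 \lesssim \varpi_2$ enforced by (\textbf{A2}); the second comparison is precisely where the structural requirement $\varpi_2(x) \gtrsim x$ from Definition \ref{definition1} is used, since $O(\mu) \lesssim \varpi_2(c\mu)$ would otherwise fail. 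Once this tradeoff is arranged, homotopy invariance closes the argument, the frequency is preserved at this step, and the produced sequence $\{\xi_\nu\}$ is Cauchy with limit $\xi^* \in B(\xi_0,\delta)$ as required for the global iteration.
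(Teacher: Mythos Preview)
Your argument is correct but organized differently from the paper's. The paper applies the Brouwer degree on the \emph{fixed} ball $B(\xi_0,\delta)$: since $\sup_\xi\big|\sum_{j=0}^\nu p_{01}^j\big|<\mu_0^{1/2}$, one has directly
\[
\deg\Big(\omega(\cdot)+\sum_{j=0}^\nu p_{01}^j(\cdot),\,B(\xi_0,\delta),\,\omega(\xi_0)\Big)=\deg\big(\omega(\cdot),\,B(\xi_0,\delta),\,\omega(\xi_0)\big)=\pm1,
\]
which yields a root $\xi_+\in B(\xi_0,\delta)$. Only \emph{afterwards} does the paper show $|\xi_+-\xi|\le c\mu$: subtracting the equations at steps $\nu$ and $\nu+1$ and invoking (\textbf{A2}) gives $|p_{01}^\nu(\xi_+)|\ge\tfrac12\varpi_2(|\xi_+-\xi|)$, whence $|\xi_+-\xi|\le\varpi_2^{-1}(2c\mu_\nu)\le c\mu_\nu$ via $\varpi_1\lesssim\varpi_2$ and $x\lesssim\varpi_1(x)$.

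You instead run the degree argument on the \emph{small} ball $B_{c\mu}(\xi)$ from the outset, folding the closeness estimate into the boundary nonvanishing of the homotopy $F_t$. The computation you perform on $\partial B_{c\mu}(\xi)$ is essentially the paper's a~posteriori estimate rearranged, and the calibration of $c$ against both $\|\sum p_{01}^j\|_{\varpi_1}$ and $|p_{01}^\nu(\xi)|=O(\mu)$ works exactly because of the structural bound $x\lesssim\varpi_2(x)$ from Definition~\ref{definition1}, as you note. The paper's route has the advantage that the degree step uses only the crude smallness hypothesis and is decoupled from the quantitative (\textbf{A2}) estimate; your route is more self-contained in that existence and location come out simultaneously, at the cost of having to balance two competing bounds when choosing the radius.
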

\begin{proof}
	The proof will be completed by an induction on $ \nu $. We start with the case $ \nu=0 $. It is obvious that $ \omega \left( {{\xi _0}} \right) = \omega \left( {{\xi _0}} \right) $. Now assume that for some $ \nu  \geqslant 1 $ we have got
	\begin{equation}\label{4.25}
		\omega \left( {{\xi _i}} \right) + \sum\limits_{j = 0}^{i - 1} {p_{01}^j\left( {{\xi _i}} \right)}  = \omega \left( {{\xi _0}} \right),\;\;{\xi _i} \in {B_{c\mu }}\left( {{\xi _{i - 1}}} \right) \subset B\left( {{\xi _0},\delta } \right) \subset \mathcal{O}^o,\;\;1 \leqslant i \leqslant \nu .
	\end{equation}
	According to the properties of topological degree, as long as $ {\mu _0} $ is sufficiently small, we have
\[\deg \Big( {\omega \left(  \cdot  \right) + \sum\limits_{j = 0}^\nu  {p_{01}^j\left(  \cdot  \right)} ,B\left( {{\xi _0},\delta } \right),\omega \left( {{\xi _0}} \right)} \Big) = \deg \left( {\omega \left(  \cdot  \right),{B\left( {{\xi _0},\delta } \right)},\omega \left( {{\xi _0}} \right)} \right) \ne 0,\]
then there exists at least a $ {\xi _ + } \in {B\left( {{\xi _0},\delta } \right)} $ such that \eqref{omega} holds.

	Note \eqref{R} in Lemma \ref{3.2} implies that
	\[	{\left\| {p_{01}^j} \right\|_{{\varpi _1}}} < c{\mu _j},\;\;0 \leqslant j \leqslant \nu, \]
	i.e.,
	\begin{equation}\label{lxm}
		\left| {p_{01}^j\left( {{\xi _ + }} \right) - p_{01}^j\left( \xi  \right)} \right| < c{\mu _j}{\varpi _1}\left( {\left| {{\xi _ + } - \xi } \right|} \right),\;\;0 \leqslant j \leqslant \nu.
	\end{equation}
	This together with (\textbf{A2}) yields that
	\begin{align}\label{4.27}
		\left| {p_{01}^\nu \left( {{\xi _ + }} \right)} \right| ={}& \left| {\omega \left( {{\xi _ + }} \right) - \omega \left( \xi  \right) + \sum\nolimits_{j = 0}^{\nu  - 1} {\left( {p_{01}^j\left( {{\xi _ + }} \right) - p_{01}^j\left( \xi  \right)} \right)} } \right|\notag \\
		\geqslant{}& \left| {\omega \left( {{\xi _ + }} \right) - \omega \left( \xi  \right)} \right| - \sum\nolimits_{j = 0}^{\nu  - 1} {\left| {p_{01}^j\left( {{\xi _ + }} \right) - p_{01}^j\left( \xi  \right)} \right|} \notag \\
		\geqslant{}& {\varpi _2}\left( {\left| {{\xi _ + } - \xi } \right|} \right) - c\left( {\sum\nolimits_{j = 0}^{\nu  - 1} {{\mu _j}} } \right){\varpi _1}\left( {\left| {{\xi _ + } - \xi } \right|} \right)\notag \\
		\geqslant{}& {\varpi _2}\left( {\left| {{\xi _ + } - \xi } \right|} \right)/2,
	\end{align}
	where the first inequality follows from \eqref{lxm}, and the second uses (\textbf{A2}), while the last holds since $ \varepsilon  $ is small enough.
	
	Therefore, we have
	\begin{equation}\label{kesaicha}
		\left| {{\xi _ + } - \xi } \right| \leqslant \varpi _2^{ - 1}\left( {2\left| {p_{01}^\nu \left( {{\xi _ + }} \right)} \right|} \right) \leqslant \varpi _2^{ - 1}\left( {2c{\mu _\nu }} \right) \leqslant c\varpi_1^{ - 1}\left( {2c{\mu _\nu }} \right) \leqslant c{\mu _\nu }.
	\end{equation}
	To be clear, the first inequality in \eqref{kesaicha} follows from \eqref{4.27}, the second uses \eqref{lxm}, and (\textbf{A2}) has been employed in the third, while the last is valid via $ \mathop {\overline {\lim } }\limits_{x \to {0^ + }} x/{{\varpi}_1 }\left( x \right) <  + \infty  $, see Definition \ref{definition1}. Recall the definition of $ \mu_\nu $ in \eqref{miuniu}, we therefore prove that $ {\left\{ {{\xi _j}} \right\}_{j \in {\mathbb{N}^ + }}} $ is a Cauchy sequence since the convergence of $ \mu_\nu $ is at least super-exponential.
	
	According to \eqref{kesaicha}, we eventually arrive at
	\begin{equation}\notag
		\left| {{\xi _ + } - {\xi _0}} \right| \leqslant \sum\nolimits_{j = 1}^{\nu  + 1} {\left| {{\xi _j} - {\xi _{j - 1}}} \right|}  \leqslant c\sum\nolimits_{j = 1}^\nu  {{\mu _j}}.
	\end{equation}
	From $ \xi  \in {B\left( {{\xi _0},\delta } \right)} $ in \eqref{4.25} and the fact $ \varepsilon  $ is small enough, we have $ {B_{c\mu }}\left( \xi  \right) \subset {B\left( {{\xi _0},\delta } \right)} $.
	
	\subsubsection{Estimate on $ {N_ + } $}
	\begin{lemma}
		There is a constant $ c_3>0 $ such that the following holds:
		\begin{equation}\notag
			\left| {{\xi _ + } - \xi } \right| \leqslant {c_3}\mu ,\;\;\left| {{e_ + } - e} \right| \leqslant {c_3}{s^4}\mu ,\;\;{\left\| {{{\bar h }_ + } - \bar h } \right\|_{D\left( s \right)}} \leqslant {c_3}{s^4}\mu .
		\end{equation}
	\end{lemma}
	\begin{proof}
		See Lemma 3.5 in \cite{R-18}.
	\end{proof}

	\subsubsection{Estimate on $ {\Phi _ + } $}
	\begin{lemma}
		There is a constant $ c_4>0 $ such that for all $ \left| i \right| + \left| j \right| \leqslant 4 $,
		\begin{equation}\notag
			{\left\| {\partial _x^i\partial _y^jF} \right\|_{\hat D}} \leqslant {c_4}\gamma _0^{4}{s^{4 - \left| i \right|}}\mu \Gamma \left( {r - {r_ + }} \right).
		\end{equation}
	\end{lemma}
	\begin{proof}
		See Lemma 3.6 in \cite{R-18}.
	\end{proof}

	\begin{lemma}
		Assume that
		\[{c_4}{s^{ 3}}\mu \Gamma \left( {r - {r_ + }} \right) < \left( {r - {r_ + }} \right)/8,\;\;{c_4}{s^4}\mu \Gamma \left( {r - {r_ + }} \right) < \alpha s/8.\]
		Then the following holds. For all $ 0 \leqslant t \leqslant 1 $, the maps $ \phi _F^t:{D_{\alpha /4}} \to {D_{\alpha /2}} $ and $ \phi :\mathcal{O} \to {\mathcal{O}_ + } $ are well defined, and $ {\Phi _ + }:{D_ + } \to D\left( {s,r} \right) $. There is a constant $ c_5>0 $ such that
		\begin{align*}
			{\left\| {\phi _F^t - id} \right\|_{\tilde D}},\;{\left\| {D\phi _F^t - Id} \right\|_{\tilde D}},\;{\left\| {{D^2}\phi _F^t} \right\|_{\tilde D}},&\\
			{\left\| {{\Phi _ + } - id} \right\|_{\tilde D}},\;{\left\| {D{\Phi _ + } - Id} \right\|_{\tilde D}},\;{\left\| {{D^2}{\Phi _ + }} \right\|_{\tilde D}}& \leqslant {c_5}\mu \Gamma \left( {r - {r_ + }} \right).
		\end{align*}
	\end{lemma}
	\begin{proof}
		See Lemma 3.7 in \cite{R-18}.
	\end{proof}

	\subsubsection{Estimate on $ {P _ + } $}
	\begin{lemma}
		Assume the previous assumptions hold. Then there is a constant $ c_6>0 $ such that
		\begin{equation}\notag
			{\left\| {{P_ + }} \right\|_{{D_ + }}} \leqslant {c_6}\gamma _0^{5}{s^4}{\mu ^2}\left( {{\Gamma ^2}\left( {r - {r_ + }} \right) + \Gamma \left( {r - {r_ + }} \right)} \right).
		\end{equation}
		Moreover, if
		\begin{equation}\notag
			{\mu ^\rho }\left( {{\Gamma ^2}\left( {r - {r_ + }} \right) + \Gamma \left( {r - {r_ + }} \right)} \right) \leqslant 1,
		\end{equation}
		then
		\begin{equation}\notag
			{\left\| {{P_ + }} \right\|_{{D_ + }}} \leqslant {c_6}\gamma _0^{5}s_ + ^4{\mu _ + }.
		\end{equation}
	\end{lemma}
	
	\section{Proof of Theorem \ref{theorem1}}\label{sec-5}
	\subsection{Iteration lemma}
	In this subsection, we will give an iteration lemma which guarantees the inductive construction of the transformations in all KAM steps.
	
	Let $ {r_0},{s_0},{\gamma _0},{\mu _0},{H_0},{e_0},{{\bar h}_0},{P_0} $ be given at the beginning of Section \ref{sec-4} and let $ {D_0} = {D_0}\left( {{s_0},{r_0}} \right),{K_0} = 0,{\Phi _0} = id $. We define the following sequence inductively for all $ \nu  \geqslant 1 $
	\begin{align}
		{r_\nu } ={}& {r_0}\big( {1 - \sum\nolimits_{i = 1}^\nu  {{2^{ - i - 1}}} } \big),\notag \\
		{s_\nu } ={}& {\alpha _{\nu  - 1}}{s_{\nu  - 1}}/8,\notag \\
		{\alpha _\nu } ={}& \mu _\nu ^{2\rho } = \mu _\nu ^{1/5},\notag \\
		{\mu _\nu } ={}& {8^4}{c_0}\mu _{\nu  - 1}^{1 + \rho },\notag \\
		{K_\nu } ={}& {\left( {\left[ { - \ln {\mu _{\nu  - 1}}} \right] + 1} \right)^{3\eta }},\notag \\
		{{\tilde D}_\nu } ={}& D\left( {{s_\nu }/2,{r_\nu } + 6\left( {{r_{\nu  - 1}} - {r_\nu }} \right)/8} \right).\notag
	\end{align}
	\begin{lemma}\label{Iteration lemma}
		Denote $ {\mu _ * } = {\mu _0}/\big( {{{\left( {{M^ * } + 2} \right)}^{3}}K_1^{5\left( {\tau  + 1} \right)}} \big) $. If $ \varepsilon  $ is small enough, then the KAM step described on the above is valid for all $ \nu  \geqslant 0 $, resulting the sequences $ {H_\nu },{N_\nu },{e_\nu },{{\bar h}_\nu },{P_\nu },{\Phi _\nu } $ for $ \nu  \geqslant 1 $ with the following properties:
		\begin{align*}
			{}&\left| {{e_{\nu  + 1}} - {e_\nu }} \right|,\;{\left\| {{{\bar h}_{\nu  + 1}} - {{\bar h}_\nu }} \right\|_{D\left( {{s_\nu }} \right)}},\;{\left\| {{P_\nu }} \right\|_{D\left( {{s_\nu },{r_\nu }} \right)}},\;\left| {{\xi _{\nu  + 1}} - {\xi _\nu }} \right| \leqslant \mu _ * ^{1/2}{2^{ - \nu }},\\
			{}&\left| {{e_\nu } - {e_0}} \right|,\;{\left\| {{{\bar h}_\nu } - {{\bar h}_0}} \right\|_{D\left( {{s_\nu }} \right)}} \leqslant 2\mu _ * ^{1/2}.
		\end{align*}
		In addition, $ {\Phi _{\nu  + 1}}:{{\tilde D}_{\nu  + 1}} \to {{\tilde D}_\nu } $ is symplectic, and
		\begin{equation}\label{dafai}
			{\left\| {{\Phi _{\nu  + 1}} - id} \right\|_{{{\tilde D}_{\nu  + 1}}}} \leqslant \mu _ * ^{1/2}{2^{ - \nu }}.
		\end{equation}
		Moreover, on $ {D_{\nu  + 1}} $,
		\[{H_{\nu  + 1}} = {H_\nu } \circ {\Phi _{\nu  + 1}} = {N_{\nu  + 1}} + {P_{\nu  + 1}}.\]
	\end{lemma}
	\begin{proof}
		See Lemma 3.8 in \cite{R-18}.
	\end{proof}

	\subsection{Convergence}
	The convergence is standard. For the sake of completeness, we briefly give the framework of proof. Let
	\[{\Psi ^\nu }: = {\Phi _1} \circ {\Phi _2} \circ  \cdots {\Phi _\nu },\;\;\nu  \geqslant 1.\]
	By Lemma \ref{Iteration lemma}, we have
	\[{D_{\nu  + 1}} \subset {D_\nu },\;\;{\Psi ^\nu }:{{\tilde D}_\nu } \to {{\tilde D}_0},\;\;{H_0} \circ {\Psi ^\nu } = {H_\nu } = {N_\nu } + {P_\nu },\]
	and
	\[{N_ + } = {e_\nu } + \Big\langle {\omega \left( {{\xi _\nu }} \right) + \sum\nolimits_{j = 0}^\nu  {p_{01}^j\left( {{\xi _\nu }} \right)} ,y} \Big\rangle  + {{\bar h}_\nu }\left( {y,{\xi _\nu }} \right),\;\;\nu  \geqslant 0,\]
	where $ {\Psi ^0} = id $. Using \eqref{dafai} and the identity
	\[{\Psi ^\nu } = id + \sum\nolimits_{j = 0}^\nu  {\left( {{\Psi ^j} - {\Psi ^{j - 1}}} \right)} ,\]
	it is easy to verify that $ {\Psi ^\nu } $ is uniformly convergent and denote the limit by $ {\Psi ^\infty } $.
	
	In view of Lemma \ref{Iteration lemma}, it is obvious to see that $ {e_\nu },{{\bar h}_\nu },{\xi _\nu } $ converge uniformly about $ \nu $, and denote their limits by $ {e_\infty },{{\bar h}_\infty },\xi _\infty:=\xi^* $, respectively. By Lemma \ref{crucial}, we have
	\begin{align}
		{}&\omega \left( {{\xi _1}} \right) + p_{01}^0\left( {{\xi _1}} \right) = \omega \left( {{\xi _0}} \right),\notag \\
		{}&\omega \left( {{\xi _2}} \right) + p_{01}^0\left( {{\xi _2}} \right) + p_{01}^1\left( {{\xi _2}} \right) = \omega \left( {{\xi _0}} \right),\notag \\
		{}&\vdots\notag \\
		\label{pinlvlie}{}&\omega \left( {{\xi _\nu }} \right) + p_{01}^0\left( {{\xi _\nu }} \right) +  \cdots  + p_{01}^{\nu  - 1}\left( {{\xi _\nu }} \right) = \omega \left( {{\xi _0}} \right).
	\end{align}
	Taking limits at both sides of \eqref{pinlvlie}, we get
	\[\omega \left( {{\xi _\infty }} \right) + \sum\nolimits_{j = 0}^\infty  {p_{01}^j\left( {{\xi _\infty }} \right)}  = \omega \left( {{\xi _0}} \right).\]
	
	Then, on $ D\left( {{s_0}/2} \right) $, $ N_\nu $ converges uniformly to
	\[{N_\infty } = {e_\infty } + \left\langle {\omega \left( {{\xi _0}} \right),y} \right\rangle  + {{\bar h}_\infty }\left( {y,{\xi _\infty }} \right).\]
	Hence, on $ D\left( {{s_0}/2,{r_0}/2} \right) $,
	\[{P_\nu } = {H_0} \circ {\Psi ^\nu } - {N_\nu }\]
	converges uniformly to
	\[{P_\infty } = {H_0} \circ {\Psi ^\infty } - {N_\infty }.\]
	Since
	\[{\left\| {{P_\nu }} \right\|_{{D_\nu }}} \leqslant c\gamma _0^{5}s_\nu ^4{\mu _\nu },\]
	we have that it converges to $ 0 $ as $ \nu  \to \infty  $. So, on $ D\left( {0,{r_0}/2} \right) $,
	\[J\nabla {P_\infty } = 0.\]
	Thus, for the given $ {\xi _0} \in \mathcal{O} $, the Hamiltonian
	\[{H_\infty } = {N_\infty } + {P_\infty }\]
	admits an analytic, quasi-periodic, invariant $ n $-torus $ {\mathbb{T}^n} \times \left\{ 0 \right\} $ with the Diophantine frequency $ \omega \left( {{\xi _0}} \right) $, which is the corresponding unperturbed toral frequency.
	
	This completes the proof of Theorem \ref{theorem1}.
\end{proof}

\section{Further comments and the infinite dimensional version}\label{sec-7}

\subsection{Further comments}\label{comments}
\begin{itemize}
	%
	%
\item[(1)] Easy to see that the 
condition (\textbf{A1}) cannot be removed in the sense of solvability of the frequency equation. However, it could be weaken to the following:

(\textbf{A1*})\ Let $ \tt{p}=\omega(\xi_0)\in {\tilde \Omega } $ satisfy the Diophantine condition, where $ {\tilde \Omega } $ is an open set of $ \omega(\Omega) $, and $\Omega\subset{\mathcal{O}}$ is open. 

		At this point we do not need the Brouwer degree, and thus remove the limitation on the dimension of the parameter $ \xi $. Note that $ {\omega ^{ - 1}}( {\tilde \Omega } ) $ is also an open set because $ \omega $ is continuous.  Therefore, as long as the perturbation in KAM is sufficiently small (the smallness may depend on $ \tt{p} $, i.e., the position of $ {\omega ^{ - 1}}\left( \tt{p} \right) $),  the solvability of the frequency equation does not change thanks to the continuity of $ \omega $ (note that we avoid the boundary of range), and the uniform convergence of $\{\xi_\nu\}$ can still be proven by Cauchy theorem.
	
	\item[(2)] It should be pointed out that (\textbf{A2}) does not imply (\textbf{A1}) (or (\textbf{A1*})), and it is a somewhat strong condition. There does not exist such functions  in the case of $ n=1 $ if $ \mathop {\lim }\limits_{x \to {0^ + }} {\varpi _2}\left( x \right)/x =  + \infty  $ (for instance, $ {\varpi _2}\left( x \right) = \sqrt x $, i.e., H\"older's type), since $ \omega $ must be nowhere monotonic, but $ \varpi_2(x) $ could be $ x $ (Lipschitz's type).  The above argument does not hold if $ n \geqslant 2 $.
	
	\item[(3)] The semi norm of modulus of continuity is not necessary since we only need the uniform convergence throughout this paper, and the new perturbation at each step keeps $ \varpi_1 $ continuous because the transformation is analytic.

	\item[(4)] We claim that the conclusion of Theorem \ref{theorem1} even holds on a dense subset $ \mathcal{A} \subseteq  \mathcal{O} $ of parameter of zero Lebesgue measure in our approach, such as $ \mathcal{A}=\mathcal{O} \cap \mathbb{Q}^n  $, that is, \textbf{it characterizes the dynamics on the set of zero Lebesgue measure.} However, constraints outside $ \mathcal{A} $ (i.e., on the set $ B\left( {{\xi _0},\delta } \right) $), such as (\textbf{A1}) (or (\textbf{A1*})) and (\textbf{A2}), are indispensable. There are at least two approaches to prove the frequency-preserving KAM persistence, namely  continuation method and perturbation method. For the former, in view of the density, all the related functions can be continuously extended from $ \mathcal{A} $ to $ \mathcal{O} $ keeping all properties, we therefore directly obtain the desired conclusion. 	As to the latter, we show the perturbation technique for the key step, i.e., the Cauchy uniform convergence of $ \left\{ {{\vartheta _\nu }} \right\}_{\nu  = 1}^\infty  $. 	At this point, $ \xi_\nu \in B\left( {{\xi _0},\delta } \right) $ in \eqref{omega} may not be in the subset $ \mathcal{A} $. Fortunately, we could choose $ \vartheta_\nu \in \mathcal{A} $ such that $ \left| {{\xi _\nu } - {\vartheta _\nu }} \right| $ is sufficiently small for all $ \nu \in \mathbb{N}^+ $ in view of the density, and our iteration will not be affected. We therefore obtain that
	\begin{align}
		{}&\omega \left( {{\vartheta _1}} \right) + p_{01}^0\left( {{\vartheta _1}} \right) = \omega \left( {{\xi _0}} \right)+{\kappa _{1} },\notag \\
		{}&\omega \left( {{\vartheta_2}} \right) + p_{01}^0\left( {{\vartheta _2}} \right) + p_{01}^1\left( {{\vartheta _2}} \right) = \omega \left( {{\xi _0}} \right)+{\kappa _{2}},\notag \\
		{}&\vdots\notag \\
		\label{A1pinlv}{}&\omega \left( {{\vartheta _\nu }} \right) + p_{01}^0\left( {{\vartheta _\nu }} \right) +  \cdots  + p_{01}^{\nu  - 1}\left( {{\vartheta _\nu }} \right) = \omega \left( {{\xi _0}} \right)+{\kappa _{\nu}},
	\end{align}
	i.e., we allow a slight perturbation of the frequency at this point, but it must be of Diophantine type (full measure in $ \mathbb{R}^n $). In view of the density and the continuity of $ \omega $ and $ P $, we have $ \mathop {\lim }\limits_{\nu  \to \infty } {\kappa _\nu } = 0 $, and this leads to $ \mathop {\lim }\limits_{\nu  \to \infty } \left( {\omega \left( {{\xi _0}} \right) + {\kappa _\nu }} \right) = \omega \left( {{\xi _0}} \right) $, i.e., the prescribed frequency keeps unchanged. The uniform convergence of $ \left\{ {{\vartheta _\nu }} \right\}_{\nu  = 1}^\infty  $ can similarly be proved as that in \eqref{4.27}, due to the density of $ \mathcal{A} $. Namely, we could choose $ \vartheta_\nu \in \mathcal{A} $ such that
	\[\left\{ \begin{gathered}
		\left| {p_{01}^\nu \left( {{\vartheta _{\nu  + 1}}} \right)} \right| \leqslant 2\left| {p_{01}^\nu \left( {{\xi _{\nu  + 1}}} \right)} \right|, \hfill \\
		\left| {{\kappa _\nu }} \right| + \left| {{\kappa _{\nu  + 1}}} \right| \leqslant \frac{1}{4}{\varpi _2}\left( {\left| {{\xi _{\nu  + 1}} - {\xi _\nu }} \right|} \right), \hfill \\
		{\varpi _2}\left( {\left| {{\xi _{\nu  + 1}} - {\xi _\nu }} \right|} \right) \leqslant 2{\varpi _2}\left( {\left| {{\vartheta _{\nu  + 1}} - {\vartheta _\nu }} \right|} \right), \hfill \\
		\left| {p_{01}^j\left( {{\vartheta _{\nu  + 1}}} \right) - p_{01}^j\left( {{\vartheta _\nu }} \right)} \right| \leqslant 2\left| {p_{01}^j\left( {{\xi _{\nu  + 1}}} \right) - p_{01}^j\left( {{\xi _\nu }} \right)} \right|,\;\;0 \leqslant j \leqslant \nu  - 1. \hfill \\
	\end{gathered}  \right.\]
	Therefore, by \eqref{4.27} and \eqref{A1pinlv} we have
	\begin{align*}
		&2\left| {p_{01}^\nu \left( {{\xi _{\nu  + 1}}} \right)} \right| \geqslant \left| {p_{01}^\nu \left( {{\vartheta _{\nu  + 1}}} \right)} \right|\\
		\geqslant &\left| {\omega \left( {{\vartheta _{\nu  + 1}}} \right) - \omega \left( {{\vartheta _\nu }} \right)} \right| - \sum\limits_{j = 0}^{\nu  - 1} {\left| {p_{01}^j\left( {{\vartheta _{\nu  + 1}}} \right) - p_{01}^j\left( {{\vartheta _\nu }} \right)} \right|}  - \left| {{\kappa _\nu }} \right| - \left| {{\kappa _{\nu  + 1}}} \right|\\
		\geqslant &{\varpi _2}\left( {\left| {{\vartheta _{\nu  + 1}} - {\vartheta _\nu }} \right|} \right) - 2\sum\limits_{j = 0}^{\nu  - 1} {\left| {p_{01}^j\left( {{\xi _{\nu  + 1}}} \right) - p_{01}^j\left( {{\xi _\nu }} \right)} \right|}  - \frac{1}{4}{\varpi _2}\left( {\left| {{\xi _{\nu  + 1}} - {\xi _\nu }} \right|} \right)\\
		\geqslant &{\varpi _2}\left( {\left| {{\vartheta _{\nu  + 1}} - {\vartheta _\nu }} \right|} \right) - 2c\left( {\sum\limits_{j = 0}^{\nu  - 1} {{\mu _j}} } \right){\tilde{\varpi} _1}\left( {\left| {{\xi _{\nu  + 1}} - {\xi _\nu }} \right|} \right) - \frac{1}{2}{\varpi _2}\left( {\left| {{\vartheta _{\nu  + 1}} - {\vartheta _\nu }} \right|} \right)\\
		\geqslant &\frac{1}{2}{\varpi _2}\left( {\left| {{\vartheta _{\nu  + 1}} - {\vartheta _\nu }} \right|} \right) - \frac{1}{4}{\tilde{\varpi} _1}\left( {\left| {{\vartheta _{\nu  + 1}} - {\vartheta _\nu }} \right|} \right)\\
		\geqslant &\frac{1}{2}{\varpi _2}\left( {\left| {{\vartheta _{\nu  + 1}} - {\vartheta _\nu }} \right|} \right) - \frac{1}{4}{\varpi _2}\left( {\left| {{\vartheta _{\nu  + 1}} - {\vartheta _\nu }} \right|} \right)\\
		\ = &\frac{1}{4}{\varpi _2}\left( {\left| {{\vartheta _{\nu  + 1}} - {\vartheta _\nu }} \right|} \right).
	\end{align*}
	This leads to
	\[\left| {{\vartheta _{\nu  + 1}} - {\vartheta _\nu }} \right| \leqslant \varpi _2^{ - 1}\left( {8\left| {p_{01}^\nu \left( {{\xi _{\nu  + 1}}} \right)} \right|} \right) \leqslant c\left| {p_{01}^\nu \left( {{\xi _{\nu  + 1}}} \right)} \right| \leqslant c{\mu _\nu },\]
	which implies that $ \left\{ {{\vartheta _\nu }} \right\}_{\nu  = 1}^\infty  $ is a Cauchy sequence since the convergence rate of  $ {\mu _\nu } $ is at least super-exponential as we forego. This gives the claim.

	\item[(5)] It can be seen from \eqref{4.27} that we actually only use the weak regularity of $ P $ near $ \xi_0=\omega^{-1}(\tt{p}) $ (i.e.,  the local neighborhood in (\textbf{A2}), see Remark \ref{Remarkk3.2}). Therefore, for some given $ \tt{p} $, as long as the perturbation $ P $ has good regularity near $ \omega^{-1}(\tt{p}) $, our conclusion still holds, no matter how weak the regularity of $ P $ is elsewhere (obviously the regularity of $ P $ is very weak on the entire set $ \mathcal{O} $ at this point). An explicit example is constructed in subsection \ref{xianshi}.
	
	The above analysis holds even under the perspective of comment (4).

\end{itemize}

\subsection{Frequency-preserving KAM under weak regularity in infinite dimensional case}\label{wuqiongw}

Here we consider the infinite dimensional case. The spatial structure together with the corresponding weighted norms and the nonresonant condition of frequency we need were introduced by P\"{o}schel \cite{MR1037110}.

Let $ \Lambda $ be an infinite dimensional lattice with a weighted spatial structure $ \mathcal{S} $, where $ \mathcal{S} $ is a family of finite subsets $ A $ of $ \Lambda $. Namely, $ \mathcal{S} $ is a spatial structure on $ \Lambda $ characterized by the property that the union of any two sets in $ \mathcal{S} $ is again in $ \mathcal{S} $, if they intersect:
\[A,B \in \mathcal{S},\;\;A \cap B \ne \phi  \Rightarrow A\cup B  \in \mathcal{S}.\]
Then we introduce a nonnegative weight function $ \left[  \cdot  \right]:A \to \left[ A \right] $ defined on $ \mathcal{S}\cap \mathcal{S}  = \left\{ {A\cap B :A,B \in \mathcal{S}} \right\} $ to reflect the size, location and something else of the set $ A $. The weight function satisfies the monotonicity and subadditivity for all $ A,B $ in $ \mathcal{S} $
\begin{align*}
	A \subseteq B &\Rightarrow \left[ A \right] \leqslant \left[ B \right]\\
	A \cap B \ne \phi  &\Rightarrow \left[ {A \cup B} \right] + \left[ {A \cap B} \right] \leqslant \left[ A \right] + \left[ B \right].
\end{align*}
Next we define the norms for $ k $ runs over all nonzero
integer vectors in $ \mathbb{Z}^\Lambda $ whose support $ \operatorname{supp} k = \left\{ {\lambda :{k_\lambda } \ne 0} \right\} $ is a finite set:
\[\left| k \right|: = \sum\limits_{\lambda  \in \Lambda } {\left| {{k_\lambda }} \right|} ,\;\;\left[ {\left[ k \right]} \right] = \mathop {\min }\limits_{\operatorname{supp} k \subseteq A \in \mathcal{S}} \left[ A \right].\]
At this point, the infinite dimensional nonresonant condition can be defined as follows.

\begin{definition}[Infinite dimensional nonresonant condition]\label{ifnc}
	Given a nondecreasing approximation function $ \Delta :\left[ {0, + \infty } \right) \to \left[ {1, + \infty } \right) $, that is,  $ \Delta \left( 0 \right) = 1 $, and
	\[\frac{{\log \Delta \left( t \right)}}{t} \searrow 0,\;\;0 \leqslant t \to  + \infty, \]
	and
	\[\int_1^{ + \infty } {\frac{{\log \Delta \left( t \right)}}{t}dt}  <  + \infty .\]
	Then for some $ \alpha>0 $ and every $ 0 \ne k \in {\mathbb{Z}^\Lambda } $ with finite support, the infinite dimensional nonresonant condition read
	\[\left| {\left\langle {k,\omega } \right\rangle } \right| \geqslant \frac{\alpha }{{\Delta \left( {\left[ {\left[ k \right]} \right]} \right)\Delta \left( {\left| k \right|} \right)}}.\]
	
\end{definition}

Let $ N =  \left\langle {\omega ,I} \right\rangle  $ be the unperturbed integrable Hamiltonian with $ \omega \left( \xi  \right):{\mathbb{R}^\Lambda } \supseteq \mathcal{B}  \to {\mathbb{R}^\Lambda } $, and $ P $ be the perturbation of the form $ P = \sum\limits_{A \in \mathcal{S}} {{P_A}\left( {{\varphi _A},{I_A};{\xi_A }} \right)}  $, where $ {\varphi _A} = \left( {{\varphi _\lambda }:\lambda  \in A} \right) $, and similarly $ I_A $
and $ \xi_A $. Suppose that the perturbed Hamiltonian

\begin{equation}\label{infinteH}
H = N + P =  \left\langle {\omega ,I} \right\rangle  + \sum\limits_{A \in S} {{P_A}\left( {{\varphi _A},{I_A};{\xi_A}} \right)}
\end{equation}
is real analytic in the phase space variables $ \phi, I $ on a complex neighbourhood
\[{\mathcal{D}_{r,s}}:{\left| {\operatorname{Im} \varphi } \right|_\infty } < r,\;\;{\left| I \right|_w} < s,\]
of the torus $ {\mathcal{T}_0}: = {\mathbb{T}^\Lambda } \times \left\{ 0 \right\} $, and continuous with respect to the parameter $ \xi $ on $ \mathcal{B} $, where $ w>0 $. The norms are
\[{\left| \varphi  \right|_\infty } = \mathop {\sup }\limits_{\lambda  \in \Lambda } \left| {{\varphi _\lambda }} \right|,\;\;{\left| I \right|_w} = \sum\limits_{\lambda  \in \Lambda } {\left| {{I_\lambda }} \right|{e^{w\left[ \lambda  \right]}}} ,\;\;\left[ \lambda  \right] = \mathop {\min }\limits_{\lambda  \in A \in \mathcal{S} \cap \mathcal{S}} \left[ A \right].\]
The size of the perturbation is measured in terms of the
weighted norm
\[|||P|||{_{m,r,s}}: = \sum\limits_{A \in S} {{{\left\| {{P_A}} \right\|}_{r,s}}{e^{m\left[ A \right]}}} ,\]
where $ {P_A} = \sum\limits_k {{P_{A,k}}\left( {I,\xi } \right){e^{i\left\langle {k,\varphi } \right\rangle }}}  $ is the Fourier series expansion, and
\[\|{P_A}\|{_{r,s}}: = \sum\limits_{k \in {\mathbb{Z}^\Lambda }} {{{\left\| {{P_{A,k}}} \right\|}_s}{e^{r\left| k \right|}}} ,\;\;{\left\| {{P_{A,k}}} \right\|_s}: = \mathop {\sup }\limits_{{{\left| I \right|}_w} < s,\;\xi  \in \mathcal{B}} \left| {{P_{A,k}}\left( {I,\xi } \right)} \right|.\]
Since quantitative estimates of the smallness of the  perturbation are not emphasized here, we omit some notations in \cite{MR1037110}.

Similar to Theorem \ref{theorem1} of the finite dimensional case, we make the following assumptions in view of the comment (1) in Subsection \ref{comments}:

(\textbf{B1}) Let $ \tt{q}$ $\in {\tilde{ \mathcal{B}} } $, where $ {\tilde{ \mathcal{B}} } $ is an open set of $ \omega(\mathcal{B}) $. Assume that $ \tt{q} $ satisfies the infinite dimensional nonresonant condition, see Definition \ref{ifnc}.

(\textbf{B2}) Assume that the perturbation $ P $ is $ {\varpi}_1 $ continuous on $ \tilde{ \mathcal{B}}  $, and there exists a modulus of continuity $ \varpi_2 $ 
such that $ \varpi_1 \lesssim \varpi_2 $, 
and
\begin{equation}\notag
	\left| {\omega \left( \xi  \right) - \omega \left( \zeta  \right)} \right| \geqslant {\varpi _2}\left( {\left| {\xi  - \zeta } \right|} \right),\;\;\forall 0 < \left| {\xi  - \zeta } \right| \leqslant {1 },\;\;\xi ,\zeta  \in \tilde{ \mathcal{B}} .
\end{equation}

Here we show why the frequency-preserving technique in this paper could be applied to infinite dimensional KAM theory. Recall that Subsubsection \ref{kkkk} uses Brouwer degree in the finite dimensional case, but this is actually not necessary, as mentioned in comment (1) in Subsection \ref{comments}, only the openness and continuity are enough to ensure the solvability of the frequency equation and thus frequency-preserving KAM persistence is admited.  Crucially, (\textbf{B1}) is not limited by the dimension of the parameter and the frequency mapping, and therefore our frequency-preserving technique is not affected. Besides, the nonresonance condition is adapted to the classical KAM iteration in \cite{MR1037110} and also has no effect on our proof.  The framework for infinite dimensional KAM is the same as that in Sections \ref{sec-4} and \ref{sec-5}, that is, instead of digging out domain, we use the technique of parameter translation to keep the prescribed frequency unchanged, 
consequently, the measure estimates are not involved in our KAM approach. More preciously, denote $ {{\tilde \xi }_0}: = {\omega ^{ - 1}}\left( {\tt{q}} \right) \in \mathcal{B}  \subseteq {\mathbb{R}^\Lambda } $, then by (\textbf{B1}) we could find a sequence $ {\{ {{{\tilde \xi }_\nu }} \}_\nu } $ near $ {{\tilde \xi }_0} $ due to the KAM smallness through iteration and the continuity of $ \omega $, such that $ \omega(\tilde \xi_\nu)=\tt{q} $, i.e., is of frequency-preserving. Except for this, $ {\{ {{{\tilde \xi }_\nu }} \}_\nu } $ is indeed a  Cauchy sequence by applying (\textbf{B2}), similar to \eqref{kesaicha}. Finally, the uniform convergence of KAM iteration could be obtained directly. We therefore give the following infinite dimensional version without proof.


\begin{theorem}
	Consider Hamiltonian systems \eqref{infinteH}. Assume that  (\textbf{B1}) and (\textbf{B2}) hold. Then there exists $ \tilde\xi^* \in \tilde{ \mathcal{B}}  $ as long as $ |||P|||{_{m,r,s}} $ is sufficiently small, such that the perturbed Hamiltonian system $ H( {I,\varphi ,\tilde\xi^* } ) $ admits an invariant torus with infinite dimensional nonresonant frequency $ \tt{q}=\omega(\tilde\xi^*) $.
\end{theorem}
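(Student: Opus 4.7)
The plan is to rerun the KAM iteration of Sections \ref{sec-4} and \ref{sec-5} in the weighted analytic framework of P\"oschel \cite{MR1037110}, with two essential modifications. First, the small divisor estimates at each step will be replaced by the $\Delta$-controlled nonresonant divisors of Definition \ref{ifnc}, which by (\textbf{B1}) hold at $\tt{q}=\omega(\tilde\xi_0)$ and, since we pin the target frequency at $\tt{q}$ via parameter translation, are preserved uniformly along the iteration up to the usual $\alpha/2$ loss absorbed by $\|\partial_I\bar h_\nu - \partial_I\bar h_0\|\ll 1$, in exact analogy with Lemma \ref{3.3}. Second, and more importantly, the Brouwer degree used in Subsubsection \ref{kkkk} to solve the frequency equation must be replaced, since degree theory is not available in a general infinite dimensional parameter space; this will be done by exploiting openness of $\tilde{\mathcal{B}}$ together with the local homeomorphism property provided by (\textbf{B2}), as outlined in comment~(1) of Subsection \ref{comments}.

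Concretely, after the symplectic transformation $\phi_F^1$ at the $\nu$-th step produces a new normal form whose linear part in $I$ is $\langle\omega(\xi)+\sum_{j=0}^{\nu}p_{01}^j(\xi),I\rangle$, I will seek $\tilde\xi_{\nu+1}\in\tilde{\mathcal{B}}$ solving
\[\omega(\tilde\xi_{\nu+1}) + \sum_{j=0}^{\nu} p_{01}^j(\tilde\xi_{\nu+1}) = \tt{q},\]
equivalently the fixed point problem $\tilde\xi_{\nu+1} = \omega^{-1}\bigl(\tt{q}-\sum_{j=0}^{\nu} p_{01}^j(\tilde\xi_{\nu+1})\bigr)$. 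Since $\tt{q}$ lies in the open set $\tilde{\mathcal{B}}\subseteq\omega(\mathcal{B})$ by (\textbf{B1}) and (\textbf{B2}) makes $\omega$ a homeomorphism from $\omega^{-1}(\tilde{\mathcal{B}})$ onto $\tilde{\mathcal{B}}$ with inverse of modulus $\varpi_2^{-1}$, this fixed point equation is uniquely solvable in a small neighborhood of $\tilde\xi_0$ as soon as $|||P|||_{m,r,s}$ is small enough (depending on $\operatorname{dist}(\tt{q},\partial\tilde{\mathcal{B}})$ and on $\varpi_2$) that the argument of $\omega^{-1}$ stays inside $\tilde{\mathcal{B}}$.

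To show that $\{\tilde\xi_\nu\}$ is Cauchy I will mimic \eqref{4.27}--\eqref{kesaicha} verbatim: subtracting the frequency equations at two consecutive steps yields
\[p_{01}^\nu(\tilde\xi_{\nu+1}) = \omega(\tilde\xi_\nu)-\omega(\tilde\xi_{\nu+1}) - \sum_{j=0}^{\nu-1}\bigl(p_{01}^j(\tilde\xi_{\nu+1})-p_{01}^j(\tilde\xi_\nu)\bigr),\]
and using $\|p_{01}^j\|_{\varpi_1}\lesssim\mu_j$ together with (\textbf{B2}) and $\varpi_1\lesssim\varpi_2$ gives $\tfrac12\varpi_2(|\tilde\xi_{\nu+1}-\tilde\xi_\nu|)\leq|p_{01}^\nu(\tilde\xi_{\nu+1})|\lesssim\mu_\nu$. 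Applying $\varpi_2^{-1}$ and the bound $x/\varpi_1(x)=O(1)$ from Definition \ref{definition1} converts this into $|\tilde\xi_{\nu+1}-\tilde\xi_\nu|\lesssim\mu_\nu$, which sums since $\mu_\nu$ decays super-exponentially. Setting $\tilde\xi^*=\lim_\nu\tilde\xi_\nu$ and passing to the limit preserves the frequency at $\tt{q}$, while the telescoping $\Psi^\nu=\Phi_1\circ\cdots\circ\Phi_\nu$ converges uniformly on $\mathcal{D}_{r_0/2,s_0/2}$ to produce the desired analytic invariant torus with frequency $\tt{q}=\omega(\tilde\xi^*)$.

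The main obstacle will be the replacement of the Brouwer degree by the openness/continuity argument of comment~(1): I must verify that the fixed point iterate for $\tilde\xi_{\nu+1}$ never escapes $\tilde{\mathcal{B}}$ uniformly in $\nu$, which reduces to a quantitative control of $\varpi_2^{-1}$ near $0$ against $\operatorname{dist}(\tt{q},\partial\tilde{\mathcal{B}})$ in the infinite dimensional parameter space. All other ingredients — the weighted Cauchy and small divisor estimates, the Poisson bracket bounds for the generator $F_\nu$, and the convergence of $\Psi^\nu$ in $|||\cdot|||_{m,r,s}$ — are already present in \cite{MR1037110} and carry over with no essential change, since our parameter-translation trick is orthogonal to the weighted spatial structure machinery and the measure-theoretic arguments that usually accompany infinite dimensional KAM are entirely avoided here.
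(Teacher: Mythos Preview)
Your proposal is correct and matches the paper's own approach: the paper does not give a formal proof of this theorem but only a sketch in the paragraph preceding it, and your plan is precisely a fleshed-out version of that sketch --- run the P\"oschel weighted KAM iteration, replace the Brouwer degree of Subsubsection~\ref{kkkk} by the openness/continuity argument of comment~(1), and reproduce \eqref{4.27}--\eqref{kesaicha} verbatim under (\textbf{B2}) to obtain the Cauchy property of $\{\tilde\xi_\nu\}$. The only cosmetic difference is that you recast the frequency equation as a fixed-point problem via $\omega^{-1}$, whereas the paper phrases solvability directly through openness of $\tilde{\mathcal{B}}$ in the range; these are equivalent since (\textbf{B2}) forces $\omega$ to be injective on $\tilde{\mathcal{B}}$.
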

\begin{remark}
	Comments similar to that in Subsection \ref{comments} can also be made, we do not pursue this point.
\end{remark}

\section{Applications}\label{sec-6}
\subsection{KAM via arbitrary H\"{o}lder's type of the perturbation}
As a direct application of Theorem \ref{theorem1}, we give the following KAM theorem that the perturbation $ P $ is arbitrary H\"{o}lder continuous with respect to parameter $ \xi $, which is the first result to our knowledge.
\begin{theorem}\label{theorem2}
	Consider Hamiltonian system \eqref{H}, where the perturbation $ P $ is H\"{o}lder continuous about the parameter $ \xi $. Assume that (\textbf{A1}), (\textbf{A2}) 
	hold, where $ \varpi_2 $ in (\textbf{A2}) is Logarithmic Lipschitz, see \ref{logarithmic Lipschitz}. Then there exists a sufficiently small $ {\varepsilon _0} > 0 $, for any $ 0 < \varepsilon  < {\varepsilon _0} $, there exists $ {\xi^* } \in \mathcal{O} $, such that the perturbed Hamiltonian system $ H\left( {y,x,{\xi^* },\varepsilon } \right) $ admits an invariant torus with frequency $\tt{p}= \omega \left( {{\xi _0}} \right) $.	
\end{theorem}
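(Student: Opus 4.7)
The plan is to read Theorem~\ref{theorem2} as nothing more than an instantiation of the more general Theorem~\ref{theorem1}, once one verifies that an arbitrary H\"older modulus is comparable with a Logarithmic Lipschitz modulus in the sense of Definition~\ref{qr}. So the strategy is to extract the appropriate $\varpi_1$ from the H\"older hypothesis on $P$, take $\varpi_2$ as in the statement, and check that the inequality $\varpi_1 \lesssim \varpi_2$ required in (\textbf{A2}) is automatically satisfied.

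First I would unpack the hypothesis on $P$: saying that $P(y,x,\cdot,\varepsilon)$ is H\"older continuous in $\xi$ means there exist constants $C>0$ and an exponent $\alpha\in(0,1]$ (which may be arbitrarily small) such that
\[
|P(y,x,\xi,\varepsilon)-P(y,x,\zeta,\varepsilon)|\leqslant C|\xi-\zeta|^\alpha,\qquad 0<|\xi-\zeta|\leqslant 1.
\]
Hence $\varpi_1(x):=Cx^\alpha$ serves as a modulus of continuity for $P$ in $\xi$; indeed $\varpi_1$ is strictly increasing, continuous, vanishes at $0^+$, and $x/\varpi_1(x)=x^{1-\alpha}/C$ is bounded as $x\to 0^+$, so $\varpi_1$ satisfies Definition~\ref{definition1}.

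Next I would verify that the Logarithmic Lipschitz modulus, $\varpi_2(x)\sim -1/\ln x$ as $x\to 0^+$ (extended to a strictly increasing continuous function on $\mathbb{R}^+$), is indeed admissible, and that $\varpi_1\lesssim\varpi_2$. Admissibility follows from $\varpi_2(0^+)=0$ together with $x/\varpi_2(x)\sim -x\ln x\to 0$. The comparison is the heart of the matter:
\[
\mathop{\overline{\lim}}\limits_{x\to 0^+}\frac{\varpi_1(x)}{\varpi_2(x)}=\mathop{\overline{\lim}}\limits_{x\to 0^+}C x^\alpha\cdot(-\ln x)=0<+\infty,
\]
so $\varpi_1\lesssim\varpi_2$ for \emph{every} H\"older exponent $\alpha\in(0,1]$. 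This is precisely the regularity compatibility demanded in (\textbf{A2}).

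With the two moduli matched in this way, all the hypotheses of Theorem~\ref{theorem1} are now in force: the transversality condition (\textbf{A1}) is assumed outright, the weak convexity (\textbf{A2}) holds with this $\varpi_2$, and $P$ is $\varpi_1$ continuous in $\xi$ with $\varpi_1\lesssim\varpi_2$. Applying Theorem~\ref{theorem1} produces $\xi^*\in B(\xi_0,\delta)\subset\mathcal O$ such that the perturbed system $H(y,x,\xi^*,\varepsilon)$ admits an invariant torus with frequency $\omega(\xi_0)=\mathtt p$, which is the conclusion. There is no genuine obstacle here: the only thing worth highlighting is that the Logarithmic Lipschitz scale is, quantitatively, just barely weaker than every H\"older scale, which is why (\textbf{A2}) with this particular $\varpi_2$ is strong enough to absorb H\"older regularity of the perturbation with an arbitrary, possibly very small, exponent $\alpha$.
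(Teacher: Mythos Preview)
Your proposal is correct and follows exactly the same route as the paper: verify that the H\"older modulus $\varpi_1(x)=Cx^\alpha$ satisfies $\varpi_1\lesssim\varpi_2$ with respect to the Logarithmic Lipschitz modulus $\varpi_2$, and then invoke Theorem~\ref{theorem1}. The paper's proof is a one-line remark to this effect, while you have supplied the explicit limit computation $\overline{\lim}_{x\to 0^+}Cx^\alpha(-\ln x)=0$ that justifies it.
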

\begin{proof}
	It only needs to be noted that according to Definition \ref{qr}, H\"{o}lder's $ \varpi_1 $ is not weaker than Logarithmic Lipschitz's $ \varpi_2 $, thus Theorem \ref{theorem1} is applied to complete the proof.
\end{proof}

\subsection{An explicit example under weak regularity on a set of zero Lebesgue measure}\label{xianshi}


From the point of view of Baire category, the regularity of the majority of functions is very weak, such as nowhere differentiable (Brownian motion, etc.). Therefore, traditional KAM theorems have somewhat limitations on requirements of  parameters (Lipschitz's type). In fact, the regularity of nowhere differentiable can be even worse, for example, given a modulus of continuity $ \varpi_1 $, there exists a family of continuous functions that are nowhere $ \varpi_1 $ continuous. Namely, we have the following theorem.

Let  $ {\left\{ {{b_n}} \right\}_{n \in {\mathbb{N}^ + }}} $ be a positive sequence, satisfies $ \mathop {\lim }\limits_{n \to \infty } {b_n} = 0 $ and $ {2^{ - 1}}{b_m} - \sum\limits_{n = m + 1}^\infty  {{b_n}}  > 0 $ for every $ n \geqslant m + 1 \in \mathbb{N} $. Note that $ \sum\limits_{n = 1}^\infty  {{b_n}}  <  + \infty  $ at this point, and the convergence rate is very fast (at least exponentially), as an illustration, let $ {b_n} = {e^{ - c{n^2}}} $ with some $ c>0 $ sufficiently large. Then we show that:

\begin{theorem}\label{nonowh}
	Given a modulus of continuity $ {\varpi _1} $, there exists a  function (actually, a family)
	\[f\left( x \right) = \sum\limits_{n = 1}^\infty  {{b_n}\sin \left( {\pi {a_n}x} \right)}\]
	on $ \mathbb{R} $ and a modulus of continuity $ {\varpi _2} \gtrsim  {\varpi _1}$, such that $ f $ is $ {\varpi _2} $ continuous, but nowhere $ {\varpi _1} $ continuous.
	
\end{theorem}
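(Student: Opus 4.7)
The plan is to implement a weighted variant of Hardy's lacunary construction: choose $\{a_n\}$ recursively and extremely sparsely, depending on the target modulus $\varpi_1$ and on $\{b_n\}$, so that (i) $a_n\geq 2a_{n-1}$, (ii) $\pi\sum_{k<n}a_k b_k\leq \epsilon\, a_nb_n$ for some small absolute constant $\epsilon>0$, and (iii) $\varpi_1(c/a_n)\leq b_n/\phi(n)$ for a fixed $c>0$ and some prescribed $\phi(n)\to\infty$. Conditions (ii)--(iii) can be enforced at each step because $a_n$ may be taken arbitrarily large after $a_1,\ldots,a_{n-1}$ are fixed, and $\varpi_1(0^+)=0$ with $\varpi_1$ strictly monotone. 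Since $\sum_n b_n<\infty$, the series defining $f$ converges absolutely and uniformly, so $f$ is continuous and bounded on $\mathbb{R}$.

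To produce the upper modulus $\varpi_2$, split the increment $f(x)-f(y)$ at the cutoff index $N=N(h)$ determined by $1/a_{N+1}<h\leq 1/a_N$, where $h=|x-y|$: bound the low-frequency block by $|\sin(\pi a_n x)-\sin(\pi a_n y)|\leq\pi a_n h$ and the tail by $|\sin|\leq 1$, giving
\[
\varpi_2(h):=\pi h\sum_{n=1}^{N(h)}a_n b_n+2\sum_{n>N(h)}b_n,
\]
extended to a monotone continuous function on $(0,1]$. Conditions (i)--(ii) ensure the low-frequency block is dominated by its last term, hence $\lesssim b_N$, and under the super-exponential decay of $\{b_n\}$ embodied by the paper's example $b_n=e^{-cn^2}$, the tail is also dominated by $b_{N+1}\lesssim b_N$. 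Thus $\varpi_2(h)\asymp b_N$ when $h\asymp 1/a_N$, and (iii) yields $\varpi_1(h)\leq b_N/\phi(N)\lesssim \varpi_2(h)$, so $\varpi_1\lesssim\varpi_2$ in the sense of Definition \ref{qr}.

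For the nowhere $\varpi_1$-continuity, fix $x\in\mathbb{R}$ and $m\in\mathbb{N}^+$. Applying $\sin(\alpha+\beta)-\sin\alpha=2\sin(\beta/2)\cos(\alpha+\beta/2)$ at frequency $a_m$ with the Hardy pair of shifts $h\in\{1/(2a_m),\,3/(2a_m)\}$ (whose cosine arguments differ by $\pi/2$, so at least one has magnitude $\geq 1/\sqrt{2}$) furnishes $h_m=h_m(x)$ with $|h_m|\leq 3/(2a_m)$ such that the $m$-th term of $f(x+h_m)-f(x)$ has absolute value $\geq b_m$. Condition (ii) bounds the low-frequency contribution by $(3\epsilon/2)b_m$, and the super-exponential decay of $\{b_n\}$ bounds the high-frequency tail by $o(b_m)$; hence $|f(x+h_m)-f(x)|\geq b_m/2$ for $m$ large. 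Combining with (iii) gives
\[
\frac{|f(x+h_m)-f(x)|}{\varpi_1(|h_m|)}\;\geq\;\frac{b_m/2}{b_m/\phi(m)}\;=\;\phi(m)/2\;\longrightarrow\;\infty,
\]
precluding $\varpi_1$-continuity at $x$.

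The main obstacle is twofold. First, a single-shift estimate at frequency $a_m$ yields only the uniform lower bound $\geq b_m$ on the $m$-th term's contribution (never $\geq 2b_m$), so the high-frequency tail must be genuinely $o(b_m)$; this is why the full super-exponential decay of $\{b_n\}$ anticipated by the paper's example $b_n=e^{-cn^2}$ is needed, rather than merely the loose inequality $\sum_{n>m}b_n<b_m/2$. Second, the prescribed modulus $\varpi_1$ enters the argument only through the calibration (iii), which must be made quantitatively stronger via the free parameter $\phi(n)\to\infty$; strict monotonicity of $\varpi_1$ and $\varpi_1(0^+)=0$ are precisely what make this calibration feasible for any prescribed modulus of continuity, accounting for the ``family'' in the statement.
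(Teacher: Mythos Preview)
Your approach and the paper's are both Hardy-type lacunary constructions, but they diverge at the treatment of the high-frequency tail, and this difference is exactly what accounts for the extra hypothesis you are forced to impose on $\{b_n\}$.

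The paper's key device is an arithmetic one you did not use: it requires $a_n/a_{n-1}$ to be an \emph{even} positive integer for every $n$ (condition (a1)). The shift is then chosen as $\Delta x=a_m^{-1}\bigl(-\tfrac12\operatorname{sgn}(r_m)-r_m\bigr)$, where $a_mx=N_m+r_m$ with $|r_m|\le\tfrac12$, so that $a_m(x+\Delta x)=N_m\mp\tfrac12$ is a half-integer. The payoff is that for every $n>m$ one has $a_n(x+\Delta x)=(a_n/a_m)\cdot(\text{half-integer})\in\mathbb{Z}$, hence $\sin\bigl(\pi a_n(x+\Delta x)\bigr)=0$ exactly. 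The tail $S_3$ is therefore bounded by a \emph{single} copy of $\sum_{n>m}b_n$, not two, and the lower bound closes under the stated hypothesis $\sum_{n>m}b_n<b_m/2$ alone. Your two-shift argument, with the generic $|\sin|\le1$ bound on the tail, produces $2\sum_{n>m}b_n$; as you correctly diagnose, this forces you to assume $\sum_{n>m}b_n=o(b_m)$, which is strictly stronger than the hypothesis preceding the theorem. So your proof is correct for $b_n=e^{-cn^2}$, but it does not prove the theorem for the full class of sequences $\{b_n\}$ the paper allows.

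A secondary point: your argument that $\varpi_1\lesssim\varpi_2$ is only checked at scales $h\asymp 1/a_N$; for $h$ deep inside $(1/a_{N+1},1/a_N]$ your formula gives $\varpi_2(h)\asymp b_{N+1}$ rather than $b_N$, and the ratio $\varpi_1(h)/\varpi_2(h)\lesssim b_N/(\phi(N)b_{N+1})$ need not stay bounded. The paper sidesteps this entirely: it simply invokes that a continuous periodic function has \emph{some} modulus $\varpi_2$, and the nowhere-$\varpi_1$-continuity is what carries the content (one may then, if desired, replace $\varpi_2$ by $\max(\varpi_2,\varpi_1)$ to force $\varpi_1\lesssim\varpi_2$ trivially). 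So this gap in your write-up is inessential to the conclusion, but the explicit $\varpi_2$ you build does not do what you claim without further work.
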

\begin{remark}\label{noholder}
	As a direct application, we can construct a family of functions, which are nowhere H\"older continuous.
\end{remark}

\begin{proof}
	Here we construct a positive integer sequence $ {\left\{ {{a_n}} \right\}_{n \in {\mathbb{N}^ + }}} $, such that
	\begin{itemize}
		{\item[(a1)] $ {\left\{ {{a_n}a_{n - 1}^{ - 1},{a_n}} \right\}_{n \in {\mathbb{N}^ + }}} \in \left\{ {2k:k \in {\mathbb{N}^ + }} \right\} $;
		\item[(a2)] $ {a_m}a_{m - 1}^{ - 1} \geqslant \left( {2\pi \sum\limits_{n = 1}^\infty  {{b_n}} } \right)b_m^{ - 1} $ for $ m \in {\mathbb{N}^ + } $;
		\item[(a3)] $ {a_m} \geqslant \left( {\varpi _1^{ - 1}\left( {{m^{ - 1}}\left( {{2^{ - 1}}{b_m} - \sum\limits_{n = m + 1}^\infty  {{b_n}} } \right)} \right)} \right) $ for $ m \in {\mathbb{N}^ + } $, here $ {\varpi _1^{ - 1}} $ denotes the inverse of $ \varpi_1 $.}
\end{itemize}	
		It should be noted that the above three conditions are not harsh, as it can be seen later, (a1) is to make part of the tail of the difference vanish, while (a2) and (a3) only show that the growth rate of $ a_n $ is very fast, and they together ensure that the limit of the difference is infinite. One can construct many examples that satisfy these conditions.
		
		Notice that
		\[\sum\limits_{n = 1}^\infty  {\left| {{b_n}\sin \left( {\pi {a_n}x} \right)} \right|}  \leqslant \sum\limits_{n = 1}^\infty  {{b_n}}  <  + \infty ,\]
		then by applying the dominated convergence theorem we obtain the continuity of $ f $ on $ \mathbb{R} $. Since $ f $ is an odd periodic function, then it has a modulus of continuity $ \varpi_2 $, and we will prove that $ {\varpi _1} \lesssim {\varpi _2} $.
		
		Fix $ x>0 $, and let $ m \in \mathbb{N}^+ $ sufficiently large such that $ {a_m}x \geqslant 1 $. At this point we can write $ {a_m}x = {N_m} + {r_m} $, where $ {N_m} \in {\mathbb{N}^ + }$ and $0 < \left| {{r_m}} \right| \leqslant {2^{ - 1}} $. Take $ \Delta x = a_m^{ - 1}\left( { - \operatorname{sgn} \left( {{r_m}} \right){2^{ - 1}} - {r_m}} \right)  \to 0$ as $ m \to \infty $. This implies that $ {2^{ - 1}} \leqslant \left| {\Delta x} \right|{a_m} \leqslant 1 $. Then
		\begin{align*}
			&\frac{{f\left( {x + \Delta \left( x \right)} \right) - f\left( x \right)}}{{{\varpi _1}\left( {\left| {\Delta x} \right|} \right)}} \\
			= &\frac{{{b_m}}}{{{\varpi _1}\left( {\left| {\Delta x} \right|} \right)}}\left( {\sin \left( {\pi {a_m}\left( {x + \Delta x} \right)} \right) - \sin \left( {\pi {a_m}x} \right)} \right) \\
			&+ \sum\limits_{n = 1}^{m - 1} {\frac{{{b_n}}}{{{\varpi _1}\left( {\left| {\Delta x} \right|} \right)}}\left( {\sin \left( {\pi {a_n}\left( {x + \Delta x} \right)} \right) - \sin \left( {\pi {a_n}x} \right)} \right)} \\
			& + \sum\limits_{n = m + 1}^\infty  {\frac{{{b_n}}}{{{\varpi _1}\left( {\left| {\Delta x} \right|} \right)}}\left( {\sin \left( {\pi {a_n}\left( {x + \Delta x} \right)} \right) - \sin \left( {\pi {a_n}x} \right)} \right)} \\
			: = &{S_1} + {S_2} + {S_3}.
		\end{align*}
		
		Firstly, direct calculation gives
		\begin{align*}
			\left| {{S_1}} \right| &= \frac{{{b_m}}}{{{\varpi _1}\left( {\left| {\Delta x} \right|} \right)}}\left| {\sin \left( {\pi {a_m}\left( {x + \Delta x} \right)} \right) - \sin \left( {\pi {a_m}x} \right)} \right|\\
			& = \frac{{{b_m}}}{{{\varpi _1}\left( {\left| {\Delta x} \right|} \right)}}\left| {{{\left( { - 1} \right)}^{{N_m} + 1}}\left( {\operatorname{sgn} \left( {{r_m}} \right) + \sin \left( {\pi {r_m}} \right)} \right)} \right|\\
			& = \frac{{{b_m}}}{{{\varpi _1}\left( {\left| {\Delta x} \right|} \right)}}\left| {\operatorname{sgn} \left( {{r_m}} \right) + \sin \left( {\pi {r_m}} \right)} \right|\\
			& \geqslant \frac{{{b_m}}}{{{\varpi _1}\left( {a_m^{ - 1}} \right)}}.
		\end{align*}
		
		Secondly, recall the definition of modulus of continuity, we therefore assume that $ \frac{x}{{{\varpi _1}\left( x \right)}} $ is also a modulus of continuity without loss of generality. Then by applying the mean value theorem we obtain that
		\begin{align*}
			\left| {{S_2}} \right| &\leqslant \sum\limits_{n = 1}^{m - 1} {\frac{{{b_n}}}{{{\varpi _1}\left( {\left| {\Delta x} \right|} \right)}}\left| {\sin \left( {\pi {a_n}\left( {x + \Delta x} \right)} \right) - \sin \left( {\pi {a_n}x} \right)} \right|} \\
			&\leqslant \sum\limits_{n = 1}^{m - 1} {\frac{{{b_n}}}{{{\varpi _1}\left( {\left| {\Delta x} \right|} \right)}} \cdot \pi {a_n}\left| {\Delta x} \right|} \\
			& \leqslant \left( {\pi \sum\limits_{n = 1}^\infty  {{b_n}} } \right)\frac{{{a_{m - 1}}a_m^{ - 1}}}{{{\varpi _1}\left( {a_m^{ - 1}} \right)}}.
		\end{align*}
		
		Thirdly, note that $ {a_n}a_m^{ - 1} \in \left\{ {2k:k \in {\mathbb{N}^ + }} \right\} $. Then it follows that
		\begin{align*}
			\sin \left( {\pi {a_n}\left( {x + \Delta x} \right)} \right) &= \sin \left( {\pi {a_n}a_m^{ - 1} \cdot {a_m}\left( {x + \Delta x} \right)} \right)\\
			& = \sin \left( {\pi {a_n}a_m^{ - 1} \cdot \left( {{N_m} - {2^{ - 1}}\operatorname{sgn} \left( {{r_m}} \right)} \right)} \right)\\
			& = 0,\;\;\forall n \geqslant m + 1.
		\end{align*}
		Therefore we derive that
		\begin{align*}
			\left| {{S_3}} \right| &\leqslant \sum\limits_{n = m + 1}^\infty  {\frac{{{b_n}}}{{{\varpi _1}\left( {\left| {\Delta x} \right|} \right)}}\left| {\sin \left( {\pi {a_n}\left( {x + \Delta x} \right)} \right) - \sin \left( {\pi {a_n}x} \right)} \right|} \\
			& = \sum\limits_{n = m + 1}^\infty  {\frac{{{b_n}}}{{{\varpi _1}\left( {\left| {\Delta x} \right|} \right)}}\left| {\sin \left( {\pi {a_n}x} \right)} \right|} \\
			& \leqslant \frac{1}{{{\varpi _1}\left( {\left| {\Delta x} \right|} \right)}}\sum\limits_{n = m + 1}^\infty  {{b_n}} \\
			&\leqslant \frac{1}{{{\varpi _1}\left( {a_m^{ - 1}} \right)}}\sum\limits_{n = m + 1}^\infty  {{b_n}} .
		\end{align*}
		
		Finally by (a1), (a2) and (a3) and the estimates above, we get
		\begin{align*}
			&\frac{{\left| {f\left( {x + \Delta \left( x \right)} \right) - f\left( x \right)} \right|}}{{{\varpi _1}\left( {\left| {\Delta x} \right|} \right)}}\\
			\geqslant &\left| {{S_1}} \right| - \left| {{S_2}} \right| - \left| {{S_3}} \right|\\
			= &\frac{{{b_m}}}{{{\varpi _1}\left( {a_m^{ - 1}} \right)}} - \left( {\pi \sum\limits_{n = 1}^\infty  {{b_n}} } \right)\frac{{{a_{m - 1}}a_m^{ - 1}}}{{{\varpi _1}\left( {a_m^{ - 1}} \right)}} - \frac{1}{{{\varpi _1}\left( {a_m^{ - 1}} \right)}}\sum\limits_{n = m + 1}^\infty  {{b_n}} \\
			= &\frac{1}{{{\varpi _1}\left( {a_m^{ - 1}} \right)}}\left( {{b_m} - \left( {\pi \sum\limits_{n = 1}^\infty  {{b_n}} } \right){a_{m - 1}}a_m^{ - 1} - \sum\limits_{n = m + 1}^\infty  {{b_n}} } \right)\\
			\geqslant &\frac{1}{{{\varpi _1}\left( {a_m^{ - 1}} \right)}}\left( {{2^{ - 1}}{b_m} - \sum\limits_{n = m + 1}^\infty  {{b_n}} } \right)\\
			\geqslant &m \to  + \infty ,\;\;m \to  + \infty .
		\end{align*}
		This implies that $ f $ is $ {\varpi _2}  $ continuous but nowhere $ \varpi_1 $ continuous, i.e., $ {\varpi _1} \lesssim {\varpi _2} $, we therefore finish the proof.
	
\end{proof}
\begin{remark}
	Actually, we claim that one could construct an explicit  modulus of continuity $ \varpi^* $ of $ f(x) $ which might be weaker than the optimal one $ \varpi_2 $, i.e., $ {\varpi _1}  \lesssim {\varpi _2} \lesssim {\varpi^*} $ at this point.
	
	Namely, in view of (a1) and (a2) we obtain that
	\[{a_m}{b_m} \geqslant 2\pi {a_{m - 1}}\sum\limits_{n = 1}^\infty  {{b_n}}  \geqslant {2^m}\pi {a_0}\sum\limits_{n = 1}^\infty  {{b_n}} ,\]
	which leads to $ \sum\limits_{n = 1}^\infty  {{a_n}{b_n}}  =  + \infty  $. Define
	\[N\left( h \right): = \max \left\{ {N \in {\mathbb{N}^ + }:h\pi \sum\limits_{n = 1}^{N\left( h \right)} {{a_n}{b_n}}  \leqslant 2\sum\limits_{n = N\left( h \right) + 1}^\infty  {{b_n}} : = \frac{1}{2}{\varpi ^ * }\left( h \right)} \right\}.\]
	One can verify that  $ N\left( h \right) \to  + \infty  $ and $ \sum\limits_{n = N\left( h \right) + 1}^\infty  {{b_n}}  \to {0^ + } $ as $ h \to {0^ + } $, which implies that the modulus of continuity $ \varpi^* $ above is well defined. Therefore,
	\begin{align*}
		\left| {f\left( {x + h} \right) - f\left( x \right)} \right| &= \left| {\sum\limits_{n = 1}^\infty  {{b_n}\left( {\sin \left( {\pi {a_n}x + \pi {a_n}h} \right) - \sin \left( {\pi {a_n}x} \right)} \right)} } \right|\\
		& \leqslant \sum\limits_{n = 1}^{N\left( h \right)} { + \sum\limits_{n = N\left( h \right) + 1}^\infty  {{b_n}\left| {\sin \left( {\pi {a_n}x + \pi {a_n}h} \right) - \sin \left( {\pi {a_n}x} \right)} \right|} } \\
		& \leqslant h\pi \sum\limits_{n = 1}^{N\left( h \right)} {{a_n}{b_n}}  + 2\sum\limits_{n = N\left( h \right) + 1}^\infty  {{b_n}} \\
		&\leqslant 4\sum\limits_{n = N\left( h \right) + 1}^\infty  {{b_n}} \\
		& = {\varpi ^ * }\left( h \right).
	\end{align*}
	This gives the claim.
\end{remark}

In view of condition (\textbf{A2}), comments (4), (5) and Theorem \ref{nonowh}, we could construct an  explicit illustration with weak regularity which cannot be studied by any KAM theorem known, where the parameter is defined on a set of zero Lebesgue measure.

 \begin{example}
By Theorem \ref{nonowh} and Remark \ref{noholder}, there exist $ g_i(x) $ on $ \mathbb{R} $  with $ 1\leqslant i \leqslant4 $ such that they are nowhere H\"older continuous. Let $ \xi_0 \in [-4^{-1},4^{-1}]$ be of Diophantine's type, see \eqref{dioph}.

For $ \left( {y,x} \right) \in B\left( {0,1} \right) $ and $ \xi  \in {\left[ { - 1,1} \right]^n} \cap {\mathbb{Q}^n} $, consider the following Hamiltonian:
\[H = \sum\limits_{i = 1}^n {{\omega _i}\left( \xi  \right){y_i}}  + \varepsilon \sum\limits_{i = 1}^n {{P_i}\left( {y,\xi } \right)}, \]
where
\[{\omega _i}\left( \xi  \right) = \left\{ \begin{aligned}
	&- {2^{ - 1}} + \left( {{g_1}\left( {{\xi _i}} \right) - {g_1}\left( { - {2^{ - 1}}} \right)} \right),&{\xi _i} \in \left[ { - 1, - {2^{ - 1}}} \right] \cap \mathbb{Q} \hfill \\
	&{\xi _i},&{\xi _i} \in \left[ { - {2^{ - 1}},{2^{ - 1}}} \right] \cap \mathbb{Q} \hfill \\
	&{2^{ - 1}} + \left( {{g_2}\left( {{\xi _i}} \right) - {g_2}\left( {{2^{ - 1}}} \right)} \right),&{\xi _i} \in \left[ {{2^{ - 1}},1} \right] \cap \mathbb{Q} \hfill \\
\end{aligned}  \right.\]
and
\[{P_i}\left( {y,\xi } \right) = \left\{ \begin{aligned}
	&\left[ { - {2^{ - 1}} + \left( {{g_3}\left( {{\xi _i}} \right) - {g_3}\left( { - {2^{ - 1}}} \right)} \right)} \right]{y_i},&{\xi _i} \in \left[ { - 1, - {2^{ - 1}}} \right] \cap \mathbb{Q} \hfill \\
	&\;{\xi _i}{y_i},&{\xi _i} \in \left[ { - {2^{ - 1}},{2^{ - 1}}} \right] \cap \mathbb{Q} \hfill \\
	&\left[ {{2^{ - 1}} + \left( {{g_4}\left( {{\xi _i}} \right) - {g_4}\left( {{2^{ - 1}}} \right)} \right)} \right]{y_i},&{\xi _i} \in \left[ {{2^{ - 1}},1} \right] \cap \mathbb{Q} \hfill \\
\end{aligned}  \right.\]
for $ 1 \leqslant i \leqslant n $.

One can easily verify that $ {\varpi _1} = {\varpi _2} = x $ on $ B\left( {{\xi _0},{8^{ - 1}}} \right) $, i.e., (\textbf{A2}) holds. Neither $ \omega $ nor $ P $ is H\"older  continuous over the entire region of parameter which is a set of zero Lebesgue measure. However, the invariant torus and the prescribed frequency could be persisted through our KAM theorem, as long as $ \varepsilon>0 $ is sufficiently small.
 \end{example}

	\section*{Acknowledgement}	
The authors would like to thank the editors and referees  for their valuable suggestions and comments, which led to a significant improvement of the paper. The corresponding author (Y. Li) was supported in part by National Basic Research Program of China (Grant No. 2013CB834100), National Natural Science Foundation of China (Grant No. 12071175, Grant No. 11171132,  Grant No. 11571065), Project of Science and Technology Development of Jilin Province (Grant No. 2017C028-1, Grant No. 20190201302JC), and Natural Science Foundation of Jilin Province (Grant No. 20200201253JC).

\end{document}